\numberwithin{equation}{section}
\newtheorem{theorem}{Theorem}[section]
\newtheorem{question}{Question}
\newtheorem{lemma}[theorem]{Lemma}
\newtheorem{definition}[theorem]{Definition}
\newtheorem{remark}[theorem]{Remark}
\newtheorem{proposition}[theorem]{Proposition}
\newtheorem{corollary}[theorem]{Corollary}
\newlist{Claim}{description}{2}
\setlist[Claim]{labelindent=2em,leftmargin=*}
\newif\ifInsideClaim
\newcounter{claim}[theorem]
\newcounter{cclaim}[claim]
\let\originalqedsymbol\qedsymbol
\newcommand{\Q}{\mathbb{Q}}
\newcommand{\N}{\mathbb{N}}
\newcommand{\R}{\mathbb{R}}
\newcommand{\Z}{\mathbb{Z}}
\newcommand{\T}{\mathbb{T}}
\newcommand{\Leb}[1]{{\mathscr L}^{#1}} 
\renewcommand{\P}{\mathbb{P}}
\newcommand{\x}{\times}
\renewcommand{\a}{\alpha}
\newcommand{\e}{\varepsilon}
\newcommand{\s}{\sigma}
\renewcommand\div{\operatorname{div}}
\newcommand{\supp}{\operatorname{supp}}
\newcommand{\vo}{\vec{o}\@ifnextchar{^}{\,}{}}
 \newcommand{\bb}{{\mbox{\boldmath$b$}}}
 \newcommand{\cc}{{\mbox{\boldmath$c$}}}
 \newcommand{\uu}{{\mbox{\boldmath$u$}}}
 \newcommand{\ww}{{\mbox{\boldmath$w$}}}
 \newcommand{\tauV}{{\kern-3pt\tau}}
 \newcommand{\XX}{{\mbox{\boldmath$X$}}}
 \newcommand{\oVVVk}{\overline{\mbox{\boldmath$V$}}\kern-3pt}
 \newcommand{\tVVVk}{\tilde{\mbox{\boldmath$V$}}\kern-3pt}
 \newcommand{\ddelta}{{\mbox{\boldmath$\delta$}}}
 \newcommand{\mmu}{{\mbox{\boldmath$\mu$}}}
 \newcommand{\nnu}{{\mbox{\boldmath$\nu$}}}
 \newcommand{\eeta}{{\mbox{\boldmath$\eta$}}}
	\title{On the stochastic selection of integral curves of a rough vector field} 
\author [J. Pitcho]{Jules Pitcho}
\address{Jules Pitcho
	\hfill\break  ENS  de Lyon, UMPA, 46 all\'ee d'Italie, 
	69364 Lyon,
	France}
\email{jules.pitcho@ens-lyon.fr}
\begin{document}
	\maketitle 
	\begin{abstract}
		We prove that for bounded, divergence-free vector fields $\bb$ in $L^1_{loc}((0,1];BV(\T^d;\R^d))$, there exists a unique incompressible measure on integral curves of $\bb$. We recall the vector field constructed by Depauw in \cite{Depauw}, which lies in the above class, and prove that for this vector field, the unique incompressible measure on integral curves exhibits stochasticity. 
	\end{abstract}
		\section{Introduction} 
Consider a bounded, divergence-free vector field $\bb:[0,1]\times\T^d\to \R^d$. A general principle due to Ambrosio (see Theorem \ref{thm_superposition}) guarantees existence of a measure concentrated on integral curves of $\bb$, whose 1-marginals at every time is the Lebesgue measure on $\T^d$. 
\begin{question}
Is there a robust selection criterion amongst all such measures?
\end{question}
 When $\bb$ lies in $L^1((0,1);BV(\T^d;\R^d))$, Ambrosio \cite{AmbBV} following on DiPerna and Lions \cite{DPL89} proved that there exists a unique such measure, thereby answering the above question affirmatively. In this work we give an affirmative answer to the above question for a class of bounded, divergence-free vector fields to which the work of Ambrosio does not apply. 
 Let us introduce the main objects of study. 
 
 \bigskip 
 
All vector fields $\bb:[0,1]\times\T^d\to\R^d$ will be Borel, essentially bounded and divergence-free, meaning that
$$\div_x\bb =0 \qquad\text{in the sense of distributions on \;} [0,1]\times\T^d.$$
 Let us define integral curves of $\bb$. 
		\begin{definition} 
	We shall say that a curve $\gamma:[0,1]\to \T^d$ is an integral curve of $\bb$, if it is an absolutely continuous solution to the ODE $$\dot{\gamma}(t)=\bb(t,\gamma(t)),$$
	which means explicitely that for every $s,t\in[0,1]$
	\begin{equation*}
	\gamma(t)-\gamma(s)=\int_s^t\bb(\tau,\gamma(\tau))d\tau \qquad\text{and} \qquad \int_0^1|\bb(\tau,\gamma(\tau))|d\tau<+\infty. 
	\end{equation*}
	We shall further say that $\gamma$ is an integral curve starting from $x$ at time $s$, if $\gamma(s)=x$ and we shall write $\gamma_{s,x}.$
\end{definition} 
Let $\Gamma$ denote the metric space $C([0,1];\T^d)$ of continuous paths endowed with the uniform metric, and let $\mathscr{M}$ denote the corresponding Borel $\s$-algebra. We recall that a selection of integral curves $\{\gamma_{s,x}:x\in\T^d\}$ is measurable, if the map $\T^d\ni x\longmapsto \gamma_{s,x}\in\Gamma$ is Borel. 
Ambrosio proposed the following definition in \cite{AmbBV}.
\begin{definition} 
A measurable selection $\{\gamma_{s,x}:x\in\T^d\}$ of integral curves of $\bb$ is said to be a \emph{regular measurable selection}, if 
there exists $C>0$ such that
\begin{equation}\label{eq_regular} 
\Big|\int_{\T^d}\phi(\gamma_{s,x}(t))dx\Big|\leq C \int_{\T^d}|\phi(y)|dy\qquad\forall \phi\in C_c(\T^d),\;\forall t\in[0,1].
\end{equation}
\end{definition} 
When $\bb$ belongs to $ L^1((0,1);BV(\T^d;\R^d))$, and satisfies $\int_0^1\|[\div_x\bb(s,\cdot)]_-\|_{L^\infty_x}ds<+\infty$, Ambrosio proved the existence of a regular measurable selection and the following essential uniqueness result: any two regular measurable selection $\{\gamma^1_{s,x}\}$ and $\{\gamma^2_{s,x}\}$\footnote{From now on, when we omit to specify the indexing set for a family of paths or a family of measures, it will implicitely be understood that the family is indexed by $\T^d$.} of integral curves of $\bb$
must coincide up to a set of vanishing Lebesgue measure. Moreover, if $\bb$ is divergence-free, essential uniqueness holds amongst measure-preserving measurable selection, that is those which satisfy
\begin{equation}
\Big|\int_{\T^d}\phi(\gamma_{s,x}(t))dx\Big|=\int_{\T^d}|\phi(y)|dy\qquad\forall \phi\in C_c(\T^d),\;\forall t\in[0,1].
\end{equation}

\bigskip 
Recently, Pappalettera constructed in \cite{Pappa23} a divergence-free vector field  $\bb$ for which there does not exist a measure-preserving selection of characteristics. This vector field does not belong to $L^1((0,1);BV(\T^d;\R^d))$. This, however does not exclude that for $\Leb{d}$-a.e. $x\in\T^d$, a probability measure concentrated on integral curves of $\bb$ starting from $x$ at time $0$ can be uniquely selected by some appropriate criterion. The present work investigates such a selection criterion for divergence-free vector fields in $L_{loc}^1((0,1];BV(\T^d;\R^d))$. For this class of vector fields, a selection criterion for solutions of the continuity equation using regularisation by convolution was already proved by the author in \cite{PitchoArmk23}. We here give a Lagrangian counterpart to this previous result. We begin by defining the measures on integral curves we shall study. 

\subsection{Lagrangian representations}
All measures are Radon measures in this work. We will denote by
 $e_t:\Gamma \ni \gamma \longmapsto\gamma(t)\in \T^d$ the evaluation map. We will also denote by $\rho:[0,1]\times\T^d\to \R^+$ a bounded density. It will always be assumed in $C([0,1];w^*-L^\infty(\T^d))$, which we may do without loss of generality, when the vector field $\rho(1,\bb)$ solves 
 \begin{equation}\label{eq_pde}\tag{PDE}
 \div_{t,x}\rho(1,\bb)=0\qquad \text{in the sense of distributions on $[0,1]\times\T^d$}. 
 \end{equation}
 A proof of this fact is given in \cite{PitchoArmk23}. Let us now define the main object under consideration in this paper. 
\begin{definition}\label{defn_lagrang_rep}

	We shall say that a bounded, positive measure $\eeta$ on $\Gamma$ is a Lagrangian representation of the vector field $\rho(1,\bb)$, if the following conditions hold: 
	\begin{enumerate}
		\item $\eeta$ is concentrated on the set $\Gamma$ of integral curves of $\bb$, 
		which explicitely means that for every $s,t\in [0,1]$
		\begin{equation*}
		\int_\Gamma\Big|\gamma(t)-\gamma(s)-\int_s^t\bb(\tau,\gamma(\tau))d\tau\Big|\eeta(d\gamma)=0;
		\end{equation*}
		\item for every $t\in[0,1]$, we have
		\begin{equation}
		\rho(t,\cdot)\Leb{d}=(e_t)_\#\eeta. 
		\end{equation}
	\end{enumerate}
\end{definition}

We now record the following general existence theorem for Lagrangian representations. 
\begin{theorem}[Ambrosio's superposition principle]\label{thm_superposition}
In the context of this paragraph, and assuming that $\rho(1,\bb)$ solves \eqref{eq_pde}, there exists a Lagrangian representation of $\rho(1,\bb)$. 
\end{theorem}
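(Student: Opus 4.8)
The plan is to run Ambrosio's smoothing-and-superposition scheme, exploiting that $\bb$ is bounded and divergence-free. First I would regularise the density to make it strictly positive: since $\div_x\bb=0$, for every $\delta>0$ the shifted density $\rho+\delta$ again solves \eqref{eq_pde} with the same field $\bb$, because
\begin{equation*}
\partial_t(\rho+\delta)+\div_x\big((\rho+\delta)\bb\big)=\div_{t,x}\rho(1,\bb)+\delta\,\div_x\bb=0 .
\end{equation*}
Mollifying in space with a standard kernel $\phi_\e$ I set $\rho^\e_t=(\rho(t,\cdot)+\delta)*\phi_\e\ge\delta>0$, write $(\rho\bb)^\e_t:=\big((\rho(t,\cdot)+\delta)\bb(t,\cdot)\big)*\phi_\e$, and define the effective velocity $\bb^\e:=(\rho\bb)^\e/\rho^\e$. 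Two facts make this field usable. Since mollification commutes with $\partial_t$ and $\div_x$, and $\rho^\e\bb^\e=(\rho\bb)^\e$, the density $\rho^\e$ solves the smoothed equation $\partial_t\rho^\e+\div_x(\rho^\e\bb^\e)=0$; and from $|(\rho\bb)^\e|\le\big((\rho+\delta)|\bb|\big)*\phi_\e\le\|\bb\|_{L^\infty}\rho^\e$ one gets the pointwise bound $|\bb^\e|\le\|\bb\|_{L^\infty}$. As $\bb^\e$ is bounded, Lipschitz in space and measurable in time, the classical (Cauchy--Lipschitz/Carath\'eodory) theory provides a unique flow $X^\e$, and I define $\eeta^\e$ as the push-forward of $\rho^\e_0\Leb{d}$ under $x\mapsto X^\e(\cdot,x)\in\Gamma$. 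By the method of characteristics (uniqueness of measure solutions of the smooth continuity equation) we have $(e_t)_\#\eeta^\e=\rho^\e_t\Leb{d}$ for every $t$, and $\eeta^\e$ is concentrated on integral curves of $\bb^\e$.

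Next I would extract a limit. Every curve charged by $\eeta^\e$ has Lipschitz constant at most $\|\bb\|_{L^\infty}$, so all the $\eeta^\e$ are supported in the single fixed set of $\|\bb\|_{L^\infty}$-Lipschitz curves, which is compact in $\Gamma$ by Ascoli--Arzel\`a ($\T^d$ being compact). The masses $\eeta^\e(\Gamma)=\int_{\T^d}\rho^\e_0=\int_{\T^d}\rho_0+\delta$ are uniformly bounded, so by Prokhorov the family is tight and, along a subsequence, $\eeta^\e\rightharpoonup\eeta_\delta$. Since $\rho^\e_t\to\rho_t+\delta$ in $L^1(\T^d)$ and $e_t$ is continuous, passing to the limit gives $(e_t)_\#\eeta_\delta=(\rho_t+\delta)\Leb{d}$ for every $t$.

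The main obstacle is to show that $\eeta_\delta$ is concentrated on integral curves of $\bb$, not of the $\e$-dependent fields $\bb^\e$; the difficulty is that $\bb$ is merely bounded and Borel, so $\gamma\mapsto\int_s^t\bb(\tau,\gamma(\tau))\,d\tau$ is not weakly continuous on $\Gamma$ and cannot be passed to the limit directly. I would circumvent this by a double approximation. Fix continuous bounded fields $\bb_k$ with $\|\bb-\bb_k\|_{L^1([0,1]\times\T^d)}\to0$. For each $\bb_k$ the map $\gamma\mapsto|\gamma(t)-\gamma(s)-\int_s^t\bb_k(\tau,\gamma(\tau))\,d\tau|$ is bounded and continuous on $\Gamma$, so weak convergence applies; using the identity $\gamma(t)-\gamma(s)=\int_s^t\bb^\e(\tau,\gamma(\tau))\,d\tau$ valid $\eeta^\e$-a.e.\ together with $(e_\tau)_\#\eeta^\e=\rho^\e_\tau\Leb{d}$, I obtain
\begin{equation*}
\int_\Gamma\Big|\gamma(t)-\gamma(s)-\int_s^t\bb_k(\tau,\gamma(\tau))\,d\tau\Big|\,d\eeta_\delta\le\limsup_{\e}\int_0^1\!\!\int_{\T^d}\big|(\rho\bb)^\e-\rho^\e\bb_k\big|\,dy\,d\tau=\int_0^1\!\!\int_{\T^d}(\rho+\delta)|\bb-\bb_k|\le\|\rho+\delta\|_{L^\infty}\|\bb-\bb_k\|_{L^1},
\end{equation*}
where the equality uses $(\rho\bb)^\e\to(\rho+\delta)\bb$ and $\rho^\e\bb_k\to(\rho+\delta)\bb_k$ in $L^1$. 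Adding and subtracting $\bb_k$ once more and bounding $\int_\Gamma\int_s^t|\bb-\bb_k|(\tau,\gamma(\tau))\,d\tau\,d\eeta_\delta$ through the marginal of $\eeta_\delta$ by the same $L^1$ estimate, I conclude
\begin{equation*}
\int_\Gamma\Big|\gamma(t)-\gamma(s)-\int_s^t\bb(\tau,\gamma(\tau))\,d\tau\Big|\,d\eeta_\delta\le 2\|\rho+\delta\|_{L^\infty}\|\bb-\bb_k\|_{L^1}\xrightarrow[k\to\infty]{}0,
\end{equation*}
so the left-hand side vanishes and $\eeta_\delta$ is concentrated on integral curves of $\bb$.

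Finally I would remove the regularisation in $\delta$. The measures $\eeta_\delta$ are supported in the same equi-Lipschitz compact set with masses $\int\rho_0+\delta$, so a further subsequence converges weakly to some $\eeta$; the marginals pass to $(e_t)_\#\eeta=\rho_t\Leb{d}$ (since $(\rho_t+\delta)\Leb{d}\rightharpoonup\rho_t\Leb{d}$), and the concentration estimate above survives the limit with $\|\rho+\delta\|_{L^\infty}$ bounded, yielding a Lagrangian representation of $\rho(1,\bb)$ in the sense of Definition \ref{defn_lagrang_rep}. I expect the genuinely delicate points to be the verification of the smoothed continuity equation together with well-posedness of the flow $X^\e$ in the merely time-measurable setting, and the interchange of limits in the concentration estimate; the positivity trick afforded by $\div_x\bb=0$ is precisely what keeps $\bb^\e$ bounded and removes the usual difficulty of dividing by a vanishing density.
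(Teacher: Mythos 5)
Your argument is correct and is precisely the ``regularisation and compactness argument, where the hypothesis that $\rho$ is non-negative plays an essential role'' that the paper invokes by citation to \cite{ambrosiocrippaedi} instead of proving: spatial mollification of the shifted density and momentum, the uniform bound $|\bb^\e|\le\|\bb\|_{L^\infty}$ coming from positivity of $\rho+\delta$, Prokhorov compactness on the equi-Lipschitz set of curves, and the double approximation by continuous fields to pass the concentration property to the limit. No gaps; this matches the intended proof.
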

The above theorem is proved (see for instance \cite{ambrosiocrippaedi}) by a regularisation and compactness argument, where the hypothesis that $\rho$ is non-negative plays an essential role. 
Let us now introduce the set of Lipschitz paths with constant $L>0$ 
\begin{equation}\label{eqn_lipschitz_path}
\Gamma_L:=\Big\{\gamma\in \Gamma\;:\;|\gamma(s)-\gamma(t)|\leq L|s-t| \quad\forall s,t\in [0,1]\Big\}.
\end{equation} 
\begin{remark}\label{rmk_compact_seprable}
	$\Gamma_L$ is a compact, separable metric space with the metric induced from $\Gamma$. 
\end{remark}
We now have the following lemma.
\begin{lemma}\label{lem_conc_lipschitz}
Any Lagrangian representation $\eeta$ of $(1,\bb)$ is concentrated on $\Gamma_{\|\bb\|_{L^\infty_{t,x}}}$. 
\end{lemma}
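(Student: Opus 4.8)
The plan is to combine the integral-curve property (condition (1) of Definition \ref{defn_lagrang_rep}) with the marginal constraint (condition (2)) to bound the Lipschitz constant of $\eeta$-almost every path. Write $L:=\|\bb\|_{L^\infty_{t,x}}$, so that the essential boundedness of $\bb$ means the set $B:=\{(\tau,y)\in[0,1]\times\T^d : |\bb(\tau,y)|>L\}$ is $\Leb{d+1}$-negligible. If I can show that $\eeta$-almost every $\gamma$ spends zero time in $B$, i.e. $\Leb{1}(\{\tau:(\tau,\gamma(\tau))\in B\})=0$, then along such a curve $|\bb(\tau,\gamma(\tau))|\le L$ for a.e.\ $\tau$, and the integral representation of condition (1) immediately yields $|\gamma(t)-\gamma(s)|\le L|t-s|$, placing $\gamma$ in $\Gamma_L$.

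First I would make condition (1) hold simultaneously for all times. For each fixed pair $(s,t)$, condition (1) gives $\gamma(t)-\gamma(s)=\int_s^t\bb(\tau,\gamma(\tau))\,d\tau$ for $\eeta$-a.e.\ $\gamma$. Intersecting the corresponding full-measure sets over the countable family of rational pairs $(s,t)\in([0,1]\cap\Q)^2$, and then invoking continuity of both sides in $(s,t)$ (the paths are continuous and $\tau\mapsto\bb(\tau,\gamma(\tau))$ is integrable along admissible curves), I obtain a single $\eeta$-full-measure set of curves for which the identity holds for every $s,t\in[0,1]$.

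The heart of the matter is controlling the time spent in $B$, and this is where the marginal condition enters. The difficulty — and the step I expect to be the main obstacle — is that for a single fixed curve the graph $\{(\tau,\gamma(\tau)):\tau\in[0,1]\}$ is itself $\Leb{d+1}$-null, so one cannot directly test $B$ against an individual trajectory; instead one must average over $\eeta$ and transfer $\Leb{d+1}$-nullity to an $\eeta$-almost-everywhere statement. Using Tonelli and the identity $(e_\tau)_\#\eeta=\rho(\tau,\cdot)\Leb{d}$ from condition (2), together with the standing assumption that $\rho$ is a bounded density, I compute
\begin{equation*}
\int_\Gamma\int_0^1\mathbf 1_B(\tau,\gamma(\tau))\,d\tau\,\eeta(d\gamma)
=\int_0^1\int_{\T^d}\mathbf 1_B(\tau,y)\,\rho(\tau,y)\,\Leb{d}(dy)\,d\tau
\le \|\rho\|_{\infty}\,\Leb{d+1}(B)=0.
\end{equation*}
Hence $\int_0^1\mathbf 1_B(\tau,\gamma(\tau))\,d\tau=0$ for $\eeta$-a.e.\ $\gamma$, which is exactly the desired statement that $\eeta$-almost every path avoids $B$ for Lebesgue-a.e.\ time.

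Combining the two full-measure sets, for $\eeta$-a.e.\ $\gamma$ and every $s\le t$ one then has $|\gamma(t)-\gamma(s)|=\big|\int_s^t\bb(\tau,\gamma(\tau))\,d\tau\big|\le\int_s^t|\bb(\tau,\gamma(\tau))|\,d\tau\le L|t-s|$, so $\gamma\in\Gamma_L$ and the concentration claim follows. The crux is genuinely the averaging estimate above: it is the boundedness of the density $\rho$ that makes each time-slice push-forward $(e_\tau)_\#\eeta$ absolutely continuous with uniformly bounded density, and this is precisely what converts the null set $B$ in space-time into an $\eeta$-negligible defect along trajectories.
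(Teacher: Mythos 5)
Your proof is correct, but it takes a genuinely different route from the paper's. The paper deals with the fact that $\bb$ is only \emph{essentially} bounded by mollifying: it introduces $\bb^\e=\bb\ast\phi^\e$, which satisfies $\|\bb^\e\|_{L^\infty_{t,x}}\le\|\bb\|_{L^\infty_{t,x}}$ pointwise everywhere, uses Fubini together with $(e_\tau)_\#\eeta=\Leb{d}$ to show that $\int_s^t\bb^\e(\tau,\gamma(\tau))\,d\tau\to\gamma(t)-\gamma(s)$ for $\eeta$-a.e.\ $\gamma$ and each pair $(s,t)$ in a countable dense set $D$, and then reads the Lipschitz bound off the uniform pointwise bound on $\bb^\e$ before concluding by density and continuity. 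You bypass the regularisation entirely: you apply the same pivot identity $(e_\tau)_\#\eeta=\Leb{d}$ via Tonelli directly to the indicator $\mathbf{1}_B$ of the null set $B=\{|\bb|>\|\bb\|_{L^\infty_{t,x}}\}$, conclude that $\eeta$-a.e.\ curve spends zero time in $B$, and then condition (1) of Definition \ref{defn_lagrang_rep} gives the bound at once. Both arguments hinge on the same mechanism --- the marginal condition converts $\Leb{d+1}$-null sets into sets visited for zero time by $\eeta$-a.e.\ curve --- but yours is more direct, avoids the dominated-convergence passage to the limit in $\e$, and works verbatim for a representation of $\rho(1,\bb)$ with any bounded density $\rho$. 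One small point to make explicit: extending the ODE identity from rational pairs $(s,t)$ to all pairs requires $\tau\mapsto\bb(\tau,\gamma(\tau))$ to be integrable along the curve; this is supplied either by the definition of integral curve or, a posteriori, by your own $B$-avoidance step, so the two reductions should be combined before invoking continuity (in fact, as in the paper, it suffices to prove the Lipschitz inequality on the countable dense set of times and extend it by continuity of $\gamma$ alone).
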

\begin{proof}
	Let $\eeta$ be a Lagrangian representation of $(1,\bb)$. 
	Let $D$ be a countable dense subset of $[0,1]$. Let $\phi\in C_c^\infty((0,1)\times (0,1)^d)$ be a standard mollifier. Define $$\phi^\e(t,x):=\frac{1}{{\e^{d+1}}}\phi(\frac{t}{\e},\frac{x}{\e}),$$ and denote $\bb^\e=\bb\ast \phi^\e$.  Let $s,t\in D$. 
Notice that for every $\e>0$, we have
\begin{equation}
\begin{split} 
&\int_\Gamma\Big|\gamma(t)-\gamma(s)-\int_s^t\bb^\e(\tau,\gamma(\tau))d\tau\Big|\eeta(d\gamma)\\
&\leq \int_\Gamma\Big|\gamma(t)-\gamma(s)\Big|\eeta(d\gamma)+\int_\Gamma\int_s^t|\bb^\e(\tau,\gamma(\tau))d\tau|\eeta(d\gamma)\\
&\leq\int_\Gamma\int_s^t|\bb(\tau,\gamma(\tau))|d\tau\eeta(d\gamma)+\int_\Gamma\int_s^t|\bb^\e(\tau,\gamma(\tau))|d\tau\eeta(d\gamma)\\
&=\int_s^t\int_{\T^d} |\bb(\tau,x)|dxd\tau + \int_s^t\int_{\T^d} |\bb^\e(\tau,x)|dxd\tau \\
&\leq 2|t-s|\|\bb\|_{L^\infty_{t,x}},
\end{split} 
\end{equation}
where in the second to last line, we have used Fubini, as well as $(e_\tau)_\#\eeta = \Leb{d}$, and
 in the last line we have used that $\|\bb^\e\|_{L^\infty_{t,x}}\leq \|\bb\|_{L^\infty_{t,x}}$ and H\"older's inequality. 
Therefore, by the dominated convergence theorem, it holds that
\begin{equation}
\int_\Gamma\Big|\gamma(t)-\gamma(s)-\lim_{\e\downarrow 0}\int_s^t\bb^\e(\tau,\gamma(\tau))d\tau\Big|\eeta(d\gamma)=0,
\end{equation}
which implies that there exists a set $N_{s,t}\subset\Gamma$ of vanishing $\eeta$ measure such that for every $\gamma\in\Gamma-N_{s,t}$, we have
\begin{equation}
\gamma(t)-\gamma(s)=\lim_{\e\downarrow 0}\int_s^t\bb^\e(\tau,\gamma(\tau))d\tau.
\end{equation} 
Since $\|\bb^\e\|_{L^\infty_{t,x}}\leq \|\bb\|_{L^\infty_{t,x}}$, it therefore holds that for every $\gamma\in\Gamma-N_{s,t}$
\begin{equation}
|\gamma(t)-\gamma(s)|\leq |t-s|\|\bb\|_{L^\infty_{t,x}}. 
\end{equation}
Define $$N:=\bigcup_{s,t\in D} N_{s,t},$$
which is a set of vanishing $\eeta$ measure since $D$ is countable. As $s,t$ were arbitrary in $D$, and by density of $D$ in $[0,1]$, we therefore have that for every $\gamma\in \Gamma-N$, it holds 
\begin{equation*}
|\gamma(t)-\gamma(s)|\leq |t-s|\|\bb\|_{L^\infty_{t,x}} \qquad\forall s,t\in[0,1]. 
\end{equation*}
This proves the thesis. 
\end{proof}
Next we present our main tool: the disintegration of a measure with respect to a Borel map and a target measure. In the study of weak solutions of the continuity equation, disintegration has previously been used in \cite{ABC14} by Alberti, Bianchini and Crippa to establish the optimal uniqueness result for the continuity equation along a bounded, divergence-free, and autonomous in the two-dimensional setting. In \cite{BianchiniBonicatto20}, Bianchini and Bonicatto also used disintegration to prove a uniqueness result for nearly incompressible vector fields in $L^1_tBV_x$. In view of Lemma \ref{lem_conc_lipschitz}, we will identify a Lagrangian representation of $(1,\bb)$ with its restriction to the Borel $\s$-algebra of the compact set $\Gamma_{\|\bb\|_{L^\infty_{t,x}}}$, and thanks to Remark \ref{rmk_compact_seprable}, we will be able to perform a disintegration of this measure. 
	\subsection{Disintegration of measures} \label{sec_disintegration}
Let $X$ and $Y$ be compact, separable metric spaces, $\mmu$ a measure on $X$, $f:X\to Y$ a Borel map, $\nnu$ a measure on $Y$ such that $f_\#\mmu \ll \nnu$. Then there exists a Borel family $\{\mmu_y : y\in Y\}$ of measures on $X$ such that 
\begin{enumerate}
	\item $\mmu_y$ is supported on the level set $E_y:=f^{-1}(y)$ for every $y\in Y$;
	\item the measure $\mmu$ can be decomposed as $\mmu=\int_Y\mmu_yd\nnu(y)$, which means that 
	\begin{equation}\label{eqn_disintegration}
	\mmu(A)=\int_Y\mmu_y(A)d\nnu(y) 
	\end{equation}
	for every Borel set $A$ contained in $X$. 	
\end{enumerate}
If we further assume that $\mmu$ and $\nnu$ are positive measures, and that $f_\#\mmu=\nnu$, then there exists a Borel family $\{\mmu_y : y\in Y\}$ of \emph{probability} measures on $X$ satisfying $(i)$ and $(ii)$.

Any family satisfying $(i)$ and $(ii)$ is called a \emph{disintegration} of $\mmu$ with respect to $f$ and $\nnu$. The disintegration is essentially unique in the following sense: for any other disintegration $\{\tilde{\mmu}_y:y\in Y\}$ there holds $\mmu_y=\tilde\mmu_y$ for $\nnu$-a.e. $y\in Y$. 
The above facts are cited from \cite{ABC14}, and are proven in Dellacherie and Meyer \cite{DellacherieMeyer}.

\bigskip 
We now give a useful fact.
Let $g:X\to X$ a Borel map 
 such that:
\begin{itemize}
\item[(P)] for every $y\in Y$, we have $g^{-1}((f^{-1}(y))^c)=(f^{-1}(y))^c$. 
 \end{itemize} 
The following is true. 
\begin{lemma} \label{lem_disintegration_push_forward} 
	In the context of this paragraph, if $\{\mmu_y : y\in Y\}$ is a disintegration of $\mmu$ with respect to $ f$ and $\nnu$, then $\{g_\# \mmu_y : y\in Y\}$ is a disintegration of $g_\#\mmu$ with respect to $ f $ and $\nnu$. 
\end{lemma}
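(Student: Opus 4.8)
The plan is to verify directly the three defining properties of a disintegration for the candidate family $\{g_\#\mmu_y : y\in Y\}$: Borel measurability of $y\mapsto g_\#\mmu_y$, concentration of $g_\#\mmu_y$ on the fiber $E_y=f^{-1}(y)$, and the decomposition identity $g_\#\mmu=\int_Y g_\#\mmu_y\,d\nnu(y)$. The single algebraic observation that drives everything is that hypothesis (P) can be rewritten by taking complements: since $g^{-1}(E_y^c)=(g^{-1}(E_y))^c$ for any map, (P) is equivalent to $g^{-1}(E_y)=E_y$ for every $y\in Y$. In words, $g$ preserves each fiber at the level of preimages, which is precisely the compatibility one needs between $g$ and $f$.

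For the decomposition identity I would argue as follows. Fix a Borel set $A\subseteq X$. Because $g$ is Borel, $g^{-1}(A)$ is again Borel, so I may apply the given disintegration of $\mmu$ to it:
\[
g_\#\mmu(A)=\mmu(g^{-1}(A))=\int_Y\mmu_y(g^{-1}(A))\,d\nnu(y)=\int_Y g_\#\mmu_y(A)\,d\nnu(y).
\]
This is exactly \eqref{eqn_disintegration} for the pushed-forward family, valid for all Borel $A$. The same relation also yields Borel measurability of the new family, since $y\mapsto g_\#\mmu_y(A)=\mmu_y(g^{-1}(A))$ is just the measurability already assumed for $\{\mmu_y\}$ evaluated on the Borel set $g^{-1}(A)$; hence $\{g_\#\mmu_y\}$ is a Borel family.

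It remains to check concentration on fibers, which is where (P) enters. Using the reformulation $g^{-1}(E_y)=E_y$ together with the hypothesis that $\mmu_y$ is supported on $E_y$ (that is, $\mmu_y(E_y^c)=0$), I compute
\[
g_\#\mmu_y(E_y^c)=\mmu_y(g^{-1}(E_y^c))=\mmu_y(E_y^c)=0,
\]
so $g_\#\mmu_y$ is supported on $E_y$ for every $y\in Y$. If one works with the normalized statement in which the $\mmu_y$ are probability measures, then each $g_\#\mmu_y$ is again a probability measure, so that version is preserved as well.

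In truth there is no serious obstacle here: once (P) is recognized as the statement $g^{-1}(E_y)=E_y$, all three properties follow from the elementary facts that preimages commute with complements and that preimages of Borel sets under a Borel map are Borel. The only point requiring a little care is the interpretation of property (i) of a disintegration as the measure-theoretic statement $\mmu_y(E_y^c)=0$ rather than a claim about topological supports, which is the correct reading since $f$ is merely Borel and the fibers $E_y$ need not be closed.
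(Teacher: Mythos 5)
Your proof is correct and follows essentially the same route as the paper: the decomposition identity is obtained by applying the given disintegration to $g^{-1}(A)$, and fiber concentration is the one-line computation $g_\#\mmu_y(E_y^c)=\mmu_y(g^{-1}(E_y^c))=\mmu_y(E_y^c)=0$ using (P). Your added remarks on Borel measurability of the family and the complement reformulation of (P) are harmless elaborations of the same argument.
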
 
\begin{proof}
	Let $y\in Y$. Observe that $g_\#\mmu_y((f^{-1}(y))^c)=\mu_y(g^{-1}(f^{-1}(y))^c)=\mu_y((f^{-1}(y))^c)=0$ where we have used (P) in the second to last equality and that $\mmu_y$ is concentrated on $f^{-1}(y)$ in the last equality. 
So	$g_\#\mmu_y$ is supported on $f^{-1}(y)$, and since $y$ was arbitrary, this proves $(i)$. 
	
	Let $A$ a Borel set in $X$. Then, as $g^{-1}(A)$ is a Borel set in $X$, it follows that 
	\begin{equation}
	g_\#\mmu(A)=\mmu(g^{-1}(A))=\int_Y \mmu_y (g^{-1}(A))d\nnu(y)=\int_Y g_\#\mmu_y (A)d\nnu(y),
	\end{equation}
	which gives $(ii)$. 
\end{proof}

 We also have the following property of the disintegration, which we will use in this paper: 
	\begin{equation}\label{eqn_int_disintegration}
	\int_X \phi d\mmu=\int_Y\Big[\int_{E_y}\phi d\mmu_y\Big]d\nnu(y),
	\end{equation}
	for every Borel function $\phi:X\to [0,+\infty]$. 
	
\subsection{Uniqueness of regular measurable selection} Ambrosio \cite{AmbBV} proved the existence and essential uniqueness of regular measurable selections in the bounded variation setting, thereby extending the work of DiPerna and Lions \cite{DPL89}. We recall that $\rho:[0,1]\times \T^d\to\R^+$ is assumed to be in $C([0,1];w^*-L^\infty(\T^d))$. The following can be extracted from Ambrosio \cite{AmbBV}. 
\begin{theorem}\label{thm_unique_lagrang_rep}
Assume that $\bb$ belongs to $L^1((0,1);BV(\T^d;\R^d))$ and that  $\rho(1,\bb)$ solves \eqref{eq_pde}. Then, there exists a unique Lagrangian representation $\eeta$ of $\rho(1,\bb)$, which further has the following property. For every $s\in[0,1]$, there exists a regular measurable selection $\{\gamma_{s,x}\}$ of integral curves of $\bb$ such that
\begin{equation}
\eeta=\int_{\T^d}\ddelta_{\gamma_{s,x}}\rho(s,x)dx. 
\end{equation}
\end{theorem}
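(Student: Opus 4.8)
The plan is to obtain existence from the superposition principle, to read off the representation formula from a disintegration at time $s$, and to reduce uniqueness to Ambrosio's essential uniqueness of regular measurable selections recalled in the introduction. Existence of a Lagrangian representation $\eeta$ of $\rho(1,\bb)$ is immediate from Theorem \ref{thm_superposition}. By Lemma \ref{lem_conc_lipschitz} we may regard $\eeta$ as a measure on the compact, separable space $\Gamma_{\|\bb\|_{L^\infty_{t,x}}}$, so the disintegration machinery of Section \ref{sec_disintegration} applies.

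Fix $s\in[0,1]$ and disintegrate $\eeta$ with respect to $f=e_s$ and the target $\nnu=\rho(s,\cdot)\Leb{d}=(e_s)_\#\eeta$. Since both are positive and $f_\#\eeta=\nnu$, I obtain a Borel family of \emph{probability} measures $\{\eeta_{s,x}\}$ with $\eeta_{s,x}$ concentrated on $e_s^{-1}(x)$ and $\eeta=\int_{\T^d}\eeta_{s,x}\,\rho(s,x)\,dx$. Thus each $\eeta_{s,x}$ is supported on integral curves passing through $x$ at time $s$, and the whole statement follows once I show that $\eeta_{s,x}=\ddelta_{\gamma_{s,x}}$ for $\rho(s,\cdot)\Leb{d}$-a.e. $x$, with $x\mapsto\gamma_{s,x}$ Borel; the regularity bound \eqref{eq_regular} then holds with $C=\|\rho\|_{L^\infty_{t,x}}$ because $(e_t)_\#\eeta=\rho(t,\cdot)\Leb{d}$.

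The heart of the matter, and the step I expect to be the main obstacle, is to prove that these conditional measures are Dirac masses. This is where the $BV$ hypothesis enters, through Ambrosio's renormalisation theorem, which yields well-posedness of the continuity equation $\div_{t,x}(\sigma(1,\bb))=0$ in the class of bounded, nonnegative densities $\sigma$. I would argue by the standard no-splitting mechanism: if the set of $x$ at which $\eeta_{s,x}$ fails to be a Dirac mass carried positive $\rho(s,\cdot)\Leb{d}$-measure, then at some rational time two curves in the support of $\eeta_{s,x}$ would lie in disjoint Borel regions; splitting $\eeta$ according to this dichotomy and pushing forward by $e_t$ would produce two distinct nonnegative measure-valued solutions of the continuity equation sharing their datum at time $s$ but differing afterwards, contradicting uniqueness. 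The delicate points are carrying out the splitting measurably in $x$ and keeping the densities of the two pieces bounded, so that Ambrosio's uniqueness genuinely applies; the renormalisation/commutator estimate imported from \cite{AmbBV} is the real engine here.

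Granting the deterministic property, each Lagrangian representation $\eeta$ takes the form $\int_{\T^d}\ddelta_{\gamma_{s,x}}\rho(s,x)\,dx$ with $\{\gamma_{s,x}\}$ a regular measurable selection, which is the asserted formula. Finally, given two Lagrangian representations $\eeta^1,\eeta^2$, the associated selections $\{\gamma^1_{s,x}\}$ and $\{\gamma^2_{s,x}\}$ are both regular, hence coincide for $\Leb{d}$-a.e. $x$ by the essential uniqueness of regular measurable selections recalled in the introduction; therefore $\eeta^1=\eeta^2$, which establishes uniqueness and completes the proof.
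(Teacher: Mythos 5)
The paper does not actually prove this theorem: it is introduced with ``The following can be extracted from Ambrosio \cite{AmbBV}'', so there is no internal proof to compare against. Your outline --- existence via the superposition principle, disintegration of $\eeta$ with respect to $e_s$ and $\rho(s,\cdot)\Leb{d}$, a no-splitting argument reducing the Dirac property of the conditional measures to uniqueness of bounded nonnegative solutions of the continuity equation, and finally essential uniqueness of regular measurable selections --- is precisely the standard route by which this statement is established in the literature the paper is citing, so at the level of strategy you have reconstructed what the citation hides.

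Two caveats. First, the entire content of the theorem sits in the step you only sketch, namely that $\eeta_{s,x}$ is a Dirac mass for a.e.\ $x$. You correctly name the no-splitting mechanism and the two delicate points (measurability of the splitting, boundedness of the split densities), but you do not carry them out; note also that since $\eeta_{s,x}$ lives on curves defined on all of $[0,1]$, splitting must be excluded both forward and backward from $s$, so you need well-posedness of the Cauchy problem in both time directions (which does hold for divergence-free $BV$ fields). Second, there is a genuine error in how you obtain the regular selection: the family extracted from the disintegration is only defined for $\rho(s,\cdot)\Leb{d}$-a.e.\ $x$, and the unweighted bound \eqref{eq_regular} with $C=\|\rho\|_{L^\infty_{t,x}}$ does \emph{not} follow from $(e_t)_\#\eeta=\rho(t,\cdot)\Leb{d}$; that identity only yields the weighted estimate $\int_{\T^d}\phi(\gamma_{s,x}(t))\rho(s,x)\,dx=\int_{\T^d}\phi(y)\rho(t,y)\,dy$, which gives no control where $\rho(s,\cdot)$ is small or vanishes. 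The selection and its regularity must instead come from the regular Lagrangian flow of $\bb$ itself, which exists and is incompressible because $\bb$ is divergence-free and $BV$, independently of $\rho$; one then identifies $\eeta_{s,x}$ with the Dirac mass on that flow for $\rho(s,\cdot)\Leb{d}$-a.e.\ $x$, which is what yields the stated formula together with a selection that is regular on all of $\T^d$.
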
 
The above theorem implies that, if two bounded vector fields $\rho(1,\bb)$ and $\tilde\rho(1,\bb)$ solve \eqref{eq_pde} and satisfy $\rho(s,x)=\tilde\rho(s,x)$ for $\Leb{d}$-a.e. $x\in \T^d$, then $\rho=\tilde\rho$, which is the uniqueness result of Ambrosio for the Cauchy problem for the continuity equation with a vector field in $L^1_tBV_x$. 
The essential uniqueness of regular measurable selections can then be deduced. We record it in the following remark. 
\begin{remark}\label{rmk_unique_regular_measur_selec}
	Under the hypothesis of Theorem \ref{thm_unique_lagrang_rep}, if $\{\gamma^1_{s,x}\}$ and $\{\gamma^2_{s,x}\}$ are two regular measurable selection of integral curves of $\bb$, then for $\Leb{d}$-a.e. $x\in \T^d$, we have $\gamma^1_{s,x}=\gamma^2_{s,x}$. Indeed, let $\bar\rho \in L^\infty(\T^d)$ with $\bar\rho\geq 0$, and define the measures
	\begin{equation*}
	\eeta^i=\int_{\T^d}\ddelta_{\gamma^i_{s,x}}\bar\rho(x)dx\qquad \text{for \;}i=1,2.
	\end{equation*}
Then, consider the densities $\rho^1,\rho^2:[0,1]\times\T^d\to \R^+$, which lie in $C([0,1];w^*-L^\infty(\T^d))$ and are given by 
\begin{equation*}
\rho^i(t,\cdot)\Leb{d}=(e_t)_\#\eeta^i\qquad \text{for \;} i=1,2.
\end{equation*}
The vector fields $\rho^i(1,\bb)$ both solve \eqref{eq_pde}. Therefore, by Theorem \ref{thm_unique_lagrang_rep}, we have 
\begin{equation*}
\int_{\T^d}\ddelta_{\gamma^1_{s,x}}\bar\rho(x)dx=\int_{\T^d}\ddelta_{\gamma_{s,x}}\bar\rho(x)dx=\int_{\T^d}\ddelta_{\gamma^2_{s,x}}\bar\rho(x)dx.
\end{equation*}
As $\bar\rho$ was an arbitrary bounded, nonnegative function, and as the $\s$-algebra $\mathscr{M}$ of $\Gamma$ is countably generated, we have for $\Leb{d}$-a.e. $x\in\T^d$
\begin{equation*}
\ddelta_{\gamma_{s,x}^1}=\ddelta_{\gamma_{s,x}}=\ddelta_{\gamma^2_{s,x}},
\end{equation*}
which implies that for $\Leb{d}$-a.e. $x\in\T^d$
\begin{equation*}
\gamma_{s,x}^1=\gamma_{s,x}=\gamma^2_{s,x}.
\end{equation*}
\end{remark} 

Given $\tau>0,$ we define the truncated versions of $\bb$
\begin{equation}\label{eqn_b_truncated} 
\bb^\tau(t,x):=\left\{ 
\begin{split} 
\bb(t,x)\qquad &\text{if} \quad t\geq \tau,\\
0\qquad &\text{if}\quad t<\tau.
\end{split}
\right. 
\end{equation}
Under the assumption that the bounded variation norm of $\bb$ is not integrable at time zero, the following essential uniqueness of regular measurable selections of integral curves still holds.
	\begin{proposition} \label{prop_unique_regular_measurable}
Let $s\in(0,1]$.	Assume that $\bb$ belongs to $L_{loc}^1((0,1];BV(\T^d;\R^d)),$ and consider two regular measurable selections $\{\gamma^1_{s,x}\}$ and $\{\gamma^2_{s,x}\}$ of integral curves of $\bb$ starting from $s$. Then $\gamma^1_{s,x}=\gamma^2_{s,x}$ for $\Leb{d}$-a.e. $x\in\T^d.$  
\end{proposition}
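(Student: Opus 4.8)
The plan is to reduce the statement to Ambrosio's essential uniqueness of regular measurable selections, recorded in Remark~\ref{rmk_unique_regular_measur_selec}, which however requires the vector field to lie in $L^1((0,1);BV(\T^d;\R^d))$ on the \emph{whole} interval. The only obstruction to applying it directly is the possible non-integrability of the $BV$ norm near $t=0$. I would circumvent this with the truncation $\bb^\tau$ from \eqref{eqn_b_truncated} and then let $\tau\downarrow0$ along a sequence. Fixing $\tau\in(0,s)$, the first observation is that $\bb^\tau\in L^1((0,1);BV(\T^d;\R^d))$: it vanishes on $[0,\tau)$ and coincides with $\bb$ on $[\tau,1]$, and $[\tau,1]$ is a compact subset of $(0,1]$, so the $BV$ norm of $\bb$ is integrable there. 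Moreover $\bb^\tau$ is divergence-free, since at each time its spatial divergence is either $0$ or $\div_x\bb=0$.

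Next I would freeze the two selections before time $\tau$. For $i=1,2$ set
\[
\tilde\gamma^{i,\tau}_{s,x}(t):=\gamma^i_{s,x}\bigl(\max(t,\tau)\bigr),
\]
so that $\tilde\gamma^{i,\tau}_{s,x}$ follows $\gamma^i_{s,x}$ on $[\tau,1]$ and stays frozen at the value $\gamma^i_{s,x}(\tau)$ on $[0,\tau]$. Since $s>\tau$ we have $\tilde\gamma^{i,\tau}_{s,x}(s)=x$, and a direct check shows $\tilde\gamma^{i,\tau}_{s,x}$ is an integral curve of $\bb^\tau$ starting from $x$ at time $s$: on $[\tau,1]$ it solves $\dot\gamma=\bb(t,\gamma)=\bb^\tau(t,\gamma)$, while on $[0,\tau)$ it is constant, consistent with $\bb^\tau\equiv0$ there, the two pieces matching continuously at $t=\tau$.

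Then I would verify that $\{\tilde\gamma^{i,\tau}_{s,x}\}$ is a \emph{regular measurable} selection of integral curves of $\bb^\tau$. Borel measurability follows because the truncation map $T_\tau:\Gamma\to\Gamma$, $(T_\tau\gamma)(t)=\gamma(\max(t,\tau))$, is $1$-Lipschitz (hence Borel), while $x\mapsto\gamma^i_{s,x}$ is Borel by hypothesis. The bound \eqref{eq_regular} transfers with the same constant: for $t\ge\tau$ the pushforward of $\{\tilde\gamma^{i,\tau}_{s,x}\}$ at time $t$ coincides with that of $\{\gamma^i_{s,x}\}$, and for $t<\tau$ it coincides with the pushforward of $\{\gamma^i_{s,x}\}$ at the single time $\tau$, so in either case the regularity of $\{\gamma^i_{s,x}\}$ applies. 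Since $\bb^\tau$ is divergence-free and in $L^1((0,1);BV)$, Remark~\ref{rmk_unique_regular_measur_selec} now yields $\tilde\gamma^{1,\tau}_{s,x}=\tilde\gamma^{2,\tau}_{s,x}$ for $\Leb{d}$-a.e. $x$; in particular $\gamma^1_{s,x}(t)=\gamma^2_{s,x}(t)$ for every $t\in[\tau,1]$ outside a Lebesgue-null set $N_\tau$. Taking $\tau=1/n$ for $n>1/s$ and setting $N:=\bigcup_n N_{1/n}$, which is Lebesgue-null, for $x\notin N$ the two curves agree on $\bigcup_n[1/n,1]=(0,1]$, hence on all of $[0,1]$ by continuity. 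This gives $\gamma^1_{s,x}=\gamma^2_{s,x}$ for $\Leb{d}$-a.e. $x$.

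The step requiring the most care is the middle one: confirming that freezing the curves before time $\tau$ produces a genuine regular measurable selection \emph{of the truncated field} $\bb^\tau$ — the integral-curve property across the junction $t=\tau$, the preservation of the regularity constant, and the Borel measurability — since this is exactly what legitimately brings the problem into the scope of Ambrosio's $L^1BV$ uniqueness. By contrast, the closing passage to the limit $\tau\downarrow0$ is routine once full-measure agreement on each $[\tau,1]$ has been secured.
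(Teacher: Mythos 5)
Your proof is correct and follows exactly the paper's argument: truncate $\bb$ to $\bb^{1/k}$, observe that the frozen curves $\gamma^i_{s,x}(1/k\vee\cdot)$ form regular measurable selections of the truncated field, invoke the $L^1_tBV_x$ uniqueness of Remark~\ref{rmk_unique_regular_measur_selec}, and conclude by a countable union of null sets and continuity. The only difference is that you spell out the verification the paper leaves as ``can be verified directly.''
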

\begin{proof}
Let $k\in\N$.  It can be verified directly that the two measurable selections $\{\gamma^1_{s,x}(1/k\vee \cdot)\}$ and $\{\gamma^2_{s,x}(1/k\vee \cdot)\}$ are regular measurable selections of integral curves of the vector field $\bb^{1/k}$ defined in \eqref{eqn_b_truncated}. Note that this vector field belongs to $L^1((0,1);BV(\T^d;\R^d))$, so in view of Remark \ref{rmk_unique_regular_measur_selec}, we have $\gamma_{s,x}^1(1/k\vee\cdot)=\gamma^2_{s,x}(1/k\vee \cdot)$ for every $x\in \T^d-N_k$, where $N_k$ is a set of vanishing Lebesgue measure. 
Define $$N:=\bigcup_{k\in\N} N_k,$$
which is of vanishing Lebesgue measure. 
Then, for every $k\in\N$, we have $\gamma_{s,x}^1(1/k\vee\cdot)=\gamma^2_{s,x}(1/k\vee \cdot)$ for every $x\in \T^d-N,$ which implies $\gamma_{s,x}^1=\gamma^2_{s,x}$ for every $x\in \T^d-N$ by continuity. The thesis follows.
\end{proof}


\subsection{Statement of results} 
In this paper, the vector field $\bb$ will satisfy the hypothesis of Proposition \ref{prop_unique_regular_measurable}. Accordingly, we fix for the rest of the paper
$\{\gamma_{1,y}\}$ a (essentially unique) regular measurable selection of integral curves of $\bb$ starting from time $1$. 
$\bb_{DP}$ denotes the bounded, divergence-free vector field in $L_{loc}^1((0,1];BV(\T^2;\R^2))$ constructed by Depauw in \cite{Depauw}. For completeness we give a construction of $\bb_{DP}$ in the Appendix. 
We now state our main theorem. 
\begin{theorem}\label{thm_main}
	Consider a bounded, divergence-free vector field $\bb:[0,1]\times\T^d\to\R^d$. 
Assume that $\bb\in L_{loc}^1((0,1];BV(\T^d;\R^d))$.	Then, there exists a unique Lagrangian representation $\eeta$ of $(1,\bb)$, which furthermore has the following properties:
	\begin{enumerate}
	\item  the family of probability measures $\{\ddelta_{\gamma_{1,y}}\}$ is a disintegration of $\eeta$ with respect to $e_1$ and $\Leb{d}$;

	\item for every smooth sequence $(\bb^k)_{k\in\N}$ such that $\bb^k\rightarrow \bb$ in $L^1_{loc}$, $\|\bb^k\|_{L^\infty_{t,x}}\leq \|\bb\|_{L^\infty_{t,x}}$, and $\div_x\bb^k=0$, the unique Lagrangian representation of $(1,\bb^k)$ converges narrowly to $\eeta$ as $k\to+\infty$;
	
	\item there exists a Borel family of probability measures $\{\tilde\nnu_x\}$ on $\T^d$ such that any disintegration $\{\eeta_{0,x}\}$ of $\eeta$ with respect to $e_0$ and $\Leb{d}$ satisfies
	\begin{equation}
	\eeta_{0,x}=\int_{\T^d}\ddelta_{\gamma_{1,y}}\tilde\nnu_x(dy),
	\end{equation}
	for $\Leb{d}$-a.e. $x\in\T^d$. 
	
	Moreover, for the vector field $\bb_{DP}$, the measure $\tilde\nnu_x$ is not a Dirac mass for $\Leb{2}$-a.e. $x\in\T^2$. 
	\end{enumerate} 
\end{theorem}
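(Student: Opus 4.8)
The plan is to obtain existence, uniqueness and property (i) simultaneously from a truncation argument, to deduce (ii) by tightness and stability, to extract (iii) from a second disintegration, and to isolate the stochasticity of $\bb_{DP}$ as the genuinely hard point.

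\emph{Existence, uniqueness and (i).} Existence of a Lagrangian representation $\eeta$ of $(1,\bb)$ is immediate from Theorem \ref{thm_superposition}, since the constant density $\rho\equiv 1$ makes $(1,\bb)$ solve \eqref{eq_pde}. By Lemma \ref{lem_conc_lipschitz}, $\eeta$ is concentrated on the compact metric space $\Gamma_{\|\bb\|_{L^\infty_{t,x}}}$, so the disintegrations below are licit. As $(e_1)_\#\eeta=\Leb{d}$, write $\eeta=\int_{\T^d}\eeta_{1,y}\,dy$ with $\eeta_{1,y}$ concentrated on integral curves of $\bb$ with $\gamma(1)=y$. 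The key device is the freezing map $T_k:\Gamma\to\Gamma$, $T_k\gamma:=\gamma(1/k\vee\cdot\,)$, which maps $\Gamma_{\|\bb\|_{L^\infty_{t,x}}}$ into itself. One checks directly that $(T_k)_\#\eeta$ is a Lagrangian representation of $(1,\bb^{1/k})$ for the truncation \eqref{eqn_b_truncated}: it is concentrated on integral curves of $\bb^{1/k}$, and its time-$t$ marginal is $(e_{1/k\vee t})_\#\eeta=\Leb{d}$. Since $\bb^{1/k}\in L^1((0,1);BV(\T^d;\R^d))$, Theorem \ref{thm_unique_lagrang_rep} together with Proposition \ref{prop_unique_regular_measurable} identifies it with $\int_{\T^d}\ddelta_{\gamma_{1,y}(1/k\vee\cdot)}\,dy$. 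Because $e_1\circ T_k=e_1$, the map $T_k$ satisfies property (P) relative to $e_1$, so Lemma \ref{lem_disintegration_push_forward} and essential uniqueness of disintegrations give $(T_k)_\#\eeta_{1,y}=\ddelta_{\gamma_{1,y}(1/k\vee\cdot)}$ for $\Leb{d}$-a.e. $y$. Thus for a.e. $y$ the measure $\eeta_{1,y}$ is concentrated on curves agreeing with $\gamma_{1,y}$ on $[1/k,1]$; taking the union over $k\in\N$ of the exceptional null sets and letting $k\to\infty$ forces $\eeta_{1,y}=\ddelta_{\gamma_{1,y}}$ by continuity. This proves (i), and since it pins $\eeta=\int_{\T^d}\ddelta_{\gamma_{1,y}}\,dy$ independently of the representation, it proves uniqueness.

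\emph{Property (ii).} Each smooth $\bb^k$ has a measure-preserving flow, so its unique Lagrangian representation $\eeta^k$ is, by $\|\bb^k\|_{L^\infty_{t,x}}\le\|\bb\|_{L^\infty_{t,x}}$ and Lemma \ref{lem_conc_lipschitz}, concentrated on the fixed compact set $\Gamma_{\|\bb\|_{L^\infty_{t,x}}}$. Hence $(\eeta^k)$ is tight and narrowly precompact. Any narrow limit $\bar\eeta$ has $(e_t)_\#\bar\eeta=\Leb{d}$ for all $t$ by continuity of $e_t$, and the stability of the integral-curve condition under $\bb^k\to\bb$ in $L^1_{loc}$ shows $\bar\eeta$ is concentrated on integral curves of $\bb$; thus $\bar\eeta$ is a Lagrangian representation of $(1,\bb)$ and equals $\eeta$ by uniqueness. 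Since every subsequence has a sub-subsequence converging to $\eeta$, the whole sequence converges.

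\emph{Property (iii) and stochasticity.} Writing $\Phi(y):=\gamma_{1,y}$ we have $\eeta=\Phi_\#\Leb{d}$, and $F:=e_0\circ\Phi$, $F(y)=\gamma_{1,y}(0)$, satisfies $F_\#\Leb{d}=(e_0)_\#\eeta=\Leb{d}$. Disintegrating $\Leb{d}$ with respect to $F$ yields a Borel family $\{\tilde\nnu_x\}$ of probability measures with $\tilde\nnu_x$ concentrated on $F^{-1}(x)$ and $\Leb{d}=\int_{\T^d}\tilde\nnu_x\,dx$. A direct verification of conditions (i)--(ii) of Section \ref{sec_disintegration} shows that $\{\Phi_\#\tilde\nnu_x\}$ disintegrates $\eeta=\Phi_\#\Leb{d}$ with respect to $e_0$; by essential uniqueness, $\eeta_{0,x}=\Phi_\#\tilde\nnu_x=\int_{\T^d}\ddelta_{\gamma_{1,y}}\tilde\nnu_x(dy)$ for a.e. $x$, which is the asserted formula. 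It remains to show that $\tilde\nnu_x$ is not a Dirac mass for $\Leb{2}$-a.e. $x$ when $\bb=\bb_{DP}$; equivalently, that $\bb_{DP}$ admits no measure-preserving measurable selection from time $0$, since $\tilde\nnu_x=\ddelta_{y(x)}$ a.e. would make $\{\gamma_{1,y(x)}\}$ exactly such a selection. I would prove this directly using the conserved label: let $u_1\in\{-1,+1\}$ be the time-$1$ datum of Depauw's construction and $u(t,\cdot)$ its transported evolution, so that by (i) one has $u(t,\gamma(t))=u_1(\gamma(1))$ for $\eeta$-a.e. trajectory and every $t$. Then, since $\tilde\nnu_x=(e_1)_\#\eeta_{0,x}$, for every $t$ there holds $\int_{\T^2}u_1\,d\tilde\nnu_x=\int_{\T^2}u(t,z)\,\big((e_t)_\#\eeta_{0,x}\big)(dz)$. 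As $t\downarrow 0$ the marginal $(e_t)_\#\eeta_{0,x}$ concentrates at $x$ at the self-similar scale $\sim t$ of $\bb_{DP}$, while $u(t,\cdot)$ is a checkerboard at the same scale, so the right-hand side is a local average over roughly one period and tends to $0$ for a.e. $x$; since $u_1\in\{-1,+1\}$, a Dirac $\tilde\nnu_x$ would force $\int u_1\,d\tilde\nnu_x=\pm 1$, a contradiction.

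The main obstacle is this final step: quantifying the mixing of $\bb_{DP}$ so that the local averages $\int_{\T^2} u_1\,d\tilde\nnu_x$ vanish for $\Leb 2$-a.e. $x$, which rests on the precise self-similar estimates of the construction recalled in the Appendix. A secondary technical point is the $L^1_{loc}$ stability of the integral-curve condition in (ii), which must be established on time intervals bounded away from $0$ and then extended by continuity down to $t=0$.
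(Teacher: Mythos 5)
Your treatment of existence, uniqueness, (i), (ii) and the first half of (iii) is essentially the paper's own argument: your freezing map $T_k$ is the paper's backward stopping map $S^{1/k}$, and the chain (P) $\Rightarrow$ Lemma \ref{lem_disintegration_push_forward} $\Rightarrow$ identification of $(T_k)_\#\eeta_{1,y}$ with $\ddelta_{\gamma_{1,y}(1/k\vee\cdot)}$ $\Rightarrow$ limit $k\to\infty$ is exactly how the paper proves \eqref{eqn_eeta_1,x}; likewise the compactness-plus-stability scheme for (ii). Your derivation of the formula $\eeta_{0,x}=\int\ddelta_{\gamma_{1,y}}\tilde\nnu_x(dy)$ by disintegrating $\Leb{d}$ with respect to $F(y)=\gamma_{1,y}(0)$ and pushing forward under $\Phi$ is a legitimate and somewhat leaner repackaging of the paper's route through $\nnu=(e_0,e_1)_\#\eeta$ and the three Lemmas \ref{lem_disint_nu}--\ref{lem_disint_nu_x}; the two constructions produce the same family $\{\tilde\nnu_x\}$.

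The genuine gap is the stochasticity of $\bb_{DP}$, and it sits exactly where you flag it, but it is not a quantitative refinement away from being closed: the step is conceptually unsupported. The quantity $\int_{\T^2}u(t,z)\,d\bigl((e_t)_\#\eeta_{0,x}\bigr)(z)$ is constant in $t$ (it equals $\int u_1\,d\tilde\nnu_x$ by the conservation of the label), so you must actually evaluate its limit as $t\downarrow 0$, and here you are pairing two objects that both degenerate at the same scale $\sim t$: the checkerboard $u(t,\cdot)$ converges only weakly-$*$ to $0$, while $(e_t)_\#\eeta_{0,x}$ converges weakly to $\ddelta_x$. Such a pairing has no limit in general; what you need is precisely that the conditional law of $\gamma(t)$ given $\gamma(0)=x$ equidistributes between the black and white cells of the scale-$t$ checkerboard, and that is the very statement in dispute (if the trajectories from $x$ all tracked a single cell, $\tilde\nnu_x$ could well be Dirac). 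The paper avoids this limit entirely: it takes Lagrangian representations $\eeta^B,\eeta^W$ of $\rho^B(1,\bb_{DP})$ and $\rho^W(1,\bb_{DP})$ from the superposition principle, uses $\rho^B+\rho^W=1$ together with the already-established uniqueness to force $\eeta$ to be the sum of these two pieces, and then observes that the time-$1$ marginals of the two pieces are supported on essentially disjoint sets, so that for a.e.\ $x$ the measure $\tilde\nnu_x$ is a nontrivial convex combination of two mutually singular probability measures and hence not a Dirac mass. If you want to keep your conserved-label formulation, the weak-$*$ continuity $\rho^B(t,\cdot)\rightharpoonup 1/2$ as $t\to0$ must be combined with the decomposition of $\eeta$ itself, not merely with the marginals $(e_t)_\#\eeta_{0,x}$; as written, your final limit does not follow.
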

Observe that a sequence satisfying the hypothesis of $(ii)$ can be generated by regularising $\bb$ by convolution. 
We further ask the following question on the measures $\tilde\nnu_x$ of the above theorem, which capture the stochastic behaviour of the Lagrangian representation $\eeta$. 
\begin{question}
	Can one construct a bounded, divergence-free vector field in $L^1_{loc}((0,1];BV(\T^d;\R^d))$ such that the measures $\tilde\nnu_x$ are absolutely continuous with respect to $\Leb{d}$ for $\Leb{d}$-a.e. $x\in\T^d$?
\end{question}
We finally note that the class of vector field under study in this article has been previously investigated in \cite{AmbCripFigSpin09} and that stochastic selection has been investigated for a toy model in \cite{MailybaevCat}. 

\subsection{Plan of the paper}
In Section \ref{sec_uniqueness}, we prove that there exists a unique Lagrangian representation of $(1,\bb)$ under the hypothesis of Theorem \ref{thm_main}, as well as $(i)$ and $(ii)$ of Theorem \ref{thm_main}. In Section \ref{sec_stoch}, we prove $(iii)$ of Theorem \ref{thm_main}. In the Appendix, we give for completeness a construction of $\bb_{DP}$, the vector field constructed by Depauw in \cite{Depauw}. 

\bigskip 
\subsection*{Acknowledgements} The author is thankful to his advisor Nikolay Tzvetkov for his support. The author is thankful to Stefano Bianchini for enlightening discussions on measure theory and on the disintegration of a measure. The author acknowledges the hospitality of the Pitcho Centre for Scientific Studies where this work was done. 

\section{Uniqueness of the Lagrangian representation} \label{sec_uniqueness}
In this section $\bb:[0,1]\times\T^d\to\R^d$ is an essentially bounded, divergence-free Borel vector field satisfying the assumptions of Theorem \ref{thm_main}, namely $\bb\in L^1_{loc}((0,1];BV(\T^d;\R^d))$. 
\subsection{Backwards stopping of the Lagrangian representation} 
 Let $\eeta$ be a Lagrangian representation of $(1,\bb)$, which exists by Theorem \ref{thm_superposition}, and let $ \{\eeta_{1,y}\}$ be a disintegration with respect to $e_1$ and $\Leb{d}$. Let $\{\gamma_{1,y}\}$ be a (essentially unique) regular measurable selection of integral curves of $\bb$. Our goal is now to prove that for $\Leb{d}$-a.e. $y\in\T^d$, we have
 \begin{equation}\label{eqn_eeta_1,x}
 \eeta_{1,y}=\ddelta_{\gamma_{1,y}},
 \end{equation} 
 which will imply by definition of the disintegration that $$\eeta=\int_{\T^d}\ddelta_{\gamma_{1,y}}dy,$$
 from which uniqueness of the Lagrangian representation of $(1,\bb),$ as well as part $(i)$ of Theorem \ref{thm_main} will follow. Given two positive real numbers $a$ and $b$, we will write $a\vee b =\max\{a,b\}.$
  Let $\tau>0$ and
consider the backward stopping map $S^\tau :\Gamma \ni \gamma(\cdot) \longmapsto \gamma( \tau \vee\cdot)\in\Gamma$. Note that $S^\tau$ clearly satisfies (P) of Section \ref{sec_disintegration} with $X=\Gamma_{\|\bb\|_{L^\infty_{t,x}}}$, $Y=\T^d$, $f=e_1$, and $g=S^\tau$. Therefore, by Lemma \ref{lem_disintegration_push_forward}, $ \{(S^\tau)_\#\eeta_{1,y}\}$ is a disintegration with respect to $e_1$ and $\Leb{d}$. 
 For simplicity, we will write $\eeta^\tau:=(S^\tau)_\#\eeta$ and $\eeta_{1,y}^\tau:=(S^\tau)_\#\eeta_{1,y}$. 
Recall that we have defined in \eqref{eqn_b_truncated} the truncated version $\bb^\tau$ of $\bb$.
We then have the following lemma. 
\begin{lemma}\label{lem_form_of_disint}
	For every $\tau>0$, the family  $\{\ddelta_{\gamma_{1,y}(\tau\vee\cdot)}\}$ is a disintegration of $\eeta^\tau$ with respect to $e_1$ and $\Leb{d}$. 
\end{lemma}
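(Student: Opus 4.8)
The plan is to identify $\eeta^\tau$ as a Lagrangian representation of the truncated field $(1,\bb^\tau)$, to which Ambrosio's uniqueness theorem applies, and then to recognise the stopped selection $\{\gamma_{1,y}(\tau\vee\cdot)\}$ as the regular measurable selection generating it.

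First I would check that $\eeta^\tau=(S^\tau)_\#\eeta$ is a Lagrangian representation of $(1,\bb^\tau)$. The key computation is that whenever $\gamma$ is an integral curve of $\bb$, the stopped curve $S^\tau\gamma=\gamma(\tau\vee\cdot)$ is an integral curve of $\bb^\tau$: splitting $[s,t]$ at $\tau$, on $\{r<\tau\}$ the curve is constant and $\bb^\tau=0$, while on $\{r\geq\tau\}$ one has $(S^\tau\gamma)(r)=\gamma(r)$ and $\bb^\tau=\bb$, so in every case $(S^\tau\gamma)(t)-(S^\tau\gamma)(s)=\int_s^t\bb^\tau(r,(S^\tau\gamma)(r))\,dr$. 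Pushing the concentration integral of Definition \ref{defn_lagrang_rep}(1) forward under $S^\tau$ then shows $\eeta^\tau$ is concentrated on integral curves of $\bb^\tau$. For the marginals, note that $e_t\circ S^\tau$ equals $e_t$ for $t\geq\tau$ and $e_\tau$ for $t<\tau$; since $(e_r)_\#\eeta=\Leb{d}$ for all $r$, it follows that $(e_t)_\#\eeta^\tau=\Leb{d}$ for every $t$.

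Next, since truncation at $\tau>0$ removes the non-integrable behaviour at time zero, $\bb^\tau\in L^1((0,1);BV(\T^d;\R^d))$, and it is divergence-free, so Theorem \ref{thm_unique_lagrang_rep} applies: $\eeta^\tau$ is the \emph{unique} Lagrangian representation of $(1,\bb^\tau)$, and it has the form $\int_{\T^d}\ddelta_{\gamma^\tau_{1,y}}\,dy$ for the (essentially unique) regular measurable selection $\{\gamma^\tau_{1,y}\}$ of integral curves of $\bb^\tau$ starting from time $1$. It then remains to identify this selection. I would verify that $\{\gamma_{1,y}(\tau\vee\cdot)\}$ is itself a regular measurable selection of integral curves of $\bb^\tau$ starting from time $1$: measurability follows because $y\mapsto\gamma_{1,y}$ is Borel and $S^\tau$ is continuous on $\Gamma$; the integral-curve property is the computation above applied to $\gamma=\gamma_{1,y}$ (and $(S^\tau\gamma_{1,y})(1)=y$ since $1>\tau$); and the regularity bound \eqref{eq_regular} reduces, after evaluating $e_t\circ S^\tau$, to the regularity bound of $\{\gamma_{1,y}\}$ at the time $t\vee\tau$.

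By the essential uniqueness of regular measurable selections for $\bb^\tau$ (Proposition \ref{prop_unique_regular_measurable}) I conclude $\gamma^\tau_{1,y}=\gamma_{1,y}(\tau\vee\cdot)$ for $\Leb{d}$-a.e. $y$, whence $\eeta^\tau=\int_{\T^d}\ddelta_{\gamma_{1,y}(\tau\vee\cdot)}\,dy$. Together with the support property (each $\ddelta_{\gamma_{1,y}(\tau\vee\cdot)}$ sits on $e_1^{-1}(y)$, again because $1>\tau$), this exhibits $\{\ddelta_{\gamma_{1,y}(\tau\vee\cdot)}\}$ as a disintegration of $\eeta^\tau$ with respect to $e_1$ and $\Leb{d}$, as claimed. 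I expect the main technical point to be the careful case analysis showing that $S^\tau\gamma$ solves the ODE for $\bb^\tau$ across the stopping time, together with confirming that truncation indeed lands $\bb^\tau$ in $L^1_tBV_x$ so that Theorem \ref{thm_unique_lagrang_rep} is available.
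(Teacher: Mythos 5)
Your proposal is correct and follows essentially the same route as the paper: identify $\eeta^\tau=(S^\tau)_\#\eeta$ as a Lagrangian representation of the truncated field $(1,\bb^\tau)\in L^1_tBV_x$, invoke Theorem \ref{thm_unique_lagrang_rep} to write it as $\int_{\T^d}\ddelta_{\gamma^\tau_{1,y}}\,dy$, recognise $\{\gamma_{1,y}(\tau\vee\cdot)\}$ as a regular measurable selection for $\bb^\tau$ so that $\gamma^\tau_{1,y}=\gamma_{1,y}(\tau\vee\cdot)$ a.e., and conclude by essential uniqueness of the disintegration. You merely spell out the verifications the paper leaves as ``can be checked directly'' (the case analysis across the stopping time and the marginal/regularity checks), all of which are sound.
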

\begin{proof} 
As $\bb^\tau$ belongs to $L^1((0,1);BV(\T^d;\R^d))$ is bounded, and divergence-free, there is an essentially unique regular measurable selection $\{\gamma^\tau_{1,y}\}$ of integral curves of $\bb^\tau$ thanks to Remark \ref{rmk_unique_regular_measur_selec}.
Now, observe that $\{\gamma_{1,y}(\tau\vee \cdot)\}$ is a regular measurable selection of integral curves of $\bb^\tau$, hence for $\Leb{d}$-a.e. $y\in\T^d$, we have that $\gamma^\tau_{1,y}( \cdot)=\gamma_{1,y}(\tau\vee\cdot)$. 
It can be checked directly that $\eeta^\tau$ is a Lagrangian representation of $(1,\bb^\tau)$. So by Theorem \ref{thm_unique_lagrang_rep}, $\{\ddelta_{\gamma^\tau_{1,y}}\}$ is a disintegration of $\eeta^\tau$ with respect to $e_1$ and $\Leb{d}$ so that 
\begin{equation*}
\eeta^\tau=\int_{\T^d}\ddelta_{\gamma^\tau_{1,y}}dy.
\end{equation*}
Therefore, by essential uniqueness of the disintegration, $\{\ddelta_{\gamma_{1,y}(\tau\vee \cdot)}\}$ is a disintegration of $\eeta^\tau$ with respect to $e_1$ and $\Leb{d}$  so that 
\begin{equation*}
\eeta^\tau=\int_{\T^d}\ddelta_{\gamma_{1,y}(\tau\vee \cdot)}dy.
\end{equation*}
\end{proof}
We also have the following simple fact.

\begin{lemma}\label{lem_conv_lagrang_rep} 
	Let $\mmu$ be a probability measure on $\Gamma$.
Then $(S^\tau)_\#\mmu$ converges narrowly to $\mmu$ as $\tau\downarrow 0$. 
\end{lemma}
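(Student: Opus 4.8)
The plan is to verify narrow convergence directly from the definition: for every bounded continuous $F:\Gamma\to\R$, one must show that $\int_\Gamma F\,d((S^\tau)_\#\mmu)\to\int_\Gamma F\,d\mmu$ as $\tau\downarrow 0$. By the change-of-variables formula for pushforwards, the left-hand side equals $\int_\Gamma F(S^\tau\gamma)\,d\mmu(\gamma)$, so the task reduces to passing to the limit inside this integral.

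The key pointwise fact is that $S^\tau\gamma\to\gamma$ in the uniform metric of $\Gamma$ for each fixed $\gamma$. Indeed, since $S^\tau\gamma$ and $\gamma$ agree on $[\tau,1]$, we have $\|S^\tau\gamma-\gamma\|_\infty=\sup_{t\in[0,\tau]}|\gamma(\tau)-\gamma(t)|$, and this quantity tends to $0$ as $\tau\downarrow 0$ by the uniform continuity of $\gamma$ on the compact interval $[0,1]$. Consequently, by continuity of $F$ on $\Gamma$, we obtain $F(S^\tau\gamma)\to F(\gamma)$ pointwise in $\gamma$.

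Finally, I would apply the dominated convergence theorem: the integrands $F(S^\tau\gamma)$ are all dominated by the constant $\|F\|_\infty$, which is $\mmu$-integrable since $\mmu$ is a probability (hence finite) measure. Applying dominated convergence along an arbitrary sequence $\tau_n\downarrow 0$ yields $\int_\Gamma F(S^{\tau_n}\gamma)\,d\mmu(\gamma)\to\int_\Gamma F(\gamma)\,d\mmu(\gamma)$; since this limit does not depend on the chosen sequence, the full limit as $\tau\downarrow 0$ follows, which proves narrow convergence. There is no substantial obstacle here: the argument is a routine application of dominated convergence, and the only point requiring minor care is the passage from the sequential convergence supplied by dominated convergence to convergence along the continuous parameter $\tau\downarrow 0$. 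This last step is immediate because the pointwise limit $\gamma$ is independent of the approximating sequence $\tau_n$.
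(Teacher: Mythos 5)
Your proof is correct and follows essentially the same route as the paper: test against a bounded continuous function, use the pointwise convergence $S^\tau\gamma\to\gamma$ in $\Gamma$ together with continuity of the test function, and conclude by dominated convergence with the constant bound $\|F\|_\infty$. The only difference is that you spell out two details the paper leaves implicit (the uniform-continuity justification of $S^\tau\gamma\to\gamma$ and the passage from sequences $\tau_n\downarrow 0$ to the continuous limit), both of which are fine.
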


\begin{proof}
	Let $\Phi \in C_b(\Gamma)$. For every $\gamma\in \Gamma$, we have $\lim_{\tau\downarrow0} S^\tau\gamma=\gamma$. Then we have $\lim_{\tau\downarrow 0}\Phi(S^\tau\gamma)=\Phi(\gamma)$ by continuity of $\Phi$. Also, we clearly have 
	\begin{equation*} 
	\int_\Gamma| \Phi(S^\tau\gamma)|\mmu(d\gamma)\leq \|\Phi\|_{C^0} \qquad\forall \tau>0.
	\end{equation*} 
	 So, by dominated convergence, it holds that 
	\begin{equation*}
	\lim_{\tau\downarrow 0} \int_\Gamma \Phi(\gamma)(S^\tau)_\#\mmu(d\gamma)=	\lim_{\tau\downarrow 0} \int_\Gamma \Phi(S^\tau\gamma)\mmu(d\gamma)=\int_\Gamma \Phi(\gamma)\mmu(d\gamma).
	\end{equation*}
	Since $\Phi$ was arbitrary in $C_b(\Gamma)$, the thesis follows. 
\end{proof}

\subsection{Proof of $(i)$ of Theorem \ref{thm_main}}
\begin{proof}
Recall that $\eeta$ is a Lagrangian representation of $(1,\bb)$ and that $\{\eeta_{1,y}\}$ is a disintegration of $\eeta$ with respect to $e_1$ and $\Leb{d}$. Recall also that $\{\gamma_{1,y}\}$ is a (essentially unique) regular measurable selection of integral curves of $\bb$. Let us prove \eqref{eqn_eeta_1,x}, which will yield both uniqueness of the Lagrangian representation of $(1,\bb)$ as well as $(i)$ of Theorem \ref{thm_main}, namely it will show
	\begin{equation}\label{eqn_eeta_disnt}
	\eeta=\int_{\T^d}\ddelta_{\gamma_{1,y}}dy.
	\end{equation}
	By separability of $C_c(\Gamma)$, there exists a countable subset $\mathcal{N}$ of $C_c(\Gamma)$, which is dense. 
	Let $\tau>0,$ and let $\{\eeta_{1,y}\}$ be a disintegration of $\eeta$ with respect to $e_1$ and $\Leb{d}$. By Lemma \ref{lem_disintegration_push_forward}, $\{\eeta_{1,y}^\tau\}$ is a disintegration of $\eeta^\tau$ with respect to $e_1$ and $\Leb{d}$. By Lemma \ref{lem_form_of_disint}, and by essential uniqueness of the disintegration, we have $\ddelta_{\gamma_{1,y}(\tau\vee\cdot)}=\eeta_{1,y}^\tau$ for $\Leb{d}$-a.e. $y\in \T^d$. Let $\Phi\in \mathcal{N}$, and let $B$ be a Borel set in $\T^d$. We then have that
\begin{equation}
\begin{split} 
\int_B \int_\Gamma \Phi(\gamma)\ddelta_{\gamma_{1,y}}(d\gamma)dy&=\int_{B}\lim_{\tau\downarrow 0}\int_\Gamma \Phi(\gamma)\ddelta_{\gamma_{1,y}(\tau\vee \cdot)}(d\gamma)dy,\\
&=\int_{B}\lim_{\tau\downarrow 0}\int_\Gamma \Phi(\gamma)\eeta^{\tau}_{1,y}(d\gamma)dy,\\
&=\int_{B}\int_\Gamma\Phi(\gamma)\eeta_{1,y}(d\gamma)dy,
\end{split} 
\end{equation}
where in the first equality, we have used that $\ddelta_{\gamma_{1,y}(\tau\vee \cdot)}$ converges narrowly to $\ddelta_{\gamma_{1,y}}$ as $\tau\downarrow 0$ by Lemma \ref{lem_conv_lagrang_rep}. In the second to last equality, we have used that $\ddelta_{\gamma_{1,y}(\tau\vee\cdot)}=\eeta_{1,y}^\tau$ for $\Leb{d}$-a.e. $y\in \T^d$, which follows from Lemma \ref{lem_form_of_disint}. In the last equality, we have used Lemma \ref{lem_conv_lagrang_rep}. 
As $B$ was an arbitrary Borel set of $\T^d$, we have that there exists a set $N_\Phi$ of vanishing Lebesgue measure such that for every $y\in \T^d-N_\Phi$, we have
$$\int_\Gamma \Phi(\gamma)\ddelta_{\gamma_{1,y}}(d\gamma)=\int_\Gamma \Phi(\gamma)\eeta_{1,y}(d\gamma).$$
Now, let $$N:=\bigcup_{\Phi\in \mathcal{N}}N_\Phi.$$
It is a set of vanishing Lebesgue measure as $\mathcal{N}$ is countable, and for every $y\in \T^d-N$, we have
$$\int_\Gamma \Phi(\gamma)\ddelta_{\gamma_{1,y}}(d\gamma)=\int_\Gamma \Phi(\gamma)\eeta_{1,y}(d\gamma) \qquad\forall \Phi\in \mathcal{N}.$$
By density of $\mathcal{N}$ in $C_c(\Gamma)$, 
for every $y\in \T^d-N$, we have
$$\int_\Gamma \Phi(\gamma)\ddelta_{\gamma_{1,y}}(d\gamma)=\int_\Gamma \Phi(\gamma)\eeta_{1,y}(d\gamma) \qquad\forall \Phi\in C_c(\Gamma).$$
This proves that for $\Leb{d}$-a.e. $y\in \T^d$, we have $\eeta_{1,y}=\ddelta_{\gamma_{1,y}}$, which proves \eqref{eqn_eeta_disnt}. As $\eeta$ was an arbitrary Lagrangian representation of $(1,\bb)$, this proves both that there exists a unique Lagrangian representation of $(1,\bb)$, and $(i)$ of Theorem \ref{thm_main}.
\end{proof}

We will now prove that the unique Lagrangian representation of $(1,\bb)$ can be obtained as the unique limit of Lagrangian representations of suitable regularisations of $(1,\bb)$. 

\subsection{Proof of $(ii)$ of Theorem \ref{thm_main}}
\begin{proof} 
Let $(\bb^k)_{k\in\N}$ be a smooth sequence such that $\bb^k\to\bb$ in $L^1_{loc}$ and such that, for every $k\in\N$, we have
\begin{equation}
\sup_{(t,x)\in [0,1]\times \T^d} |\bb^k(t,x)|\leq \|\bb\|_{L^\infty_{t,x}},
\end{equation} and such that $\div_x\bb^k=0$. 
Let $\XX^k:[0,1]\times \T^d\to\T^d$ be the unique flow along $\bb^k$, namely $\XX^k$ solves
\begin{equation}
\left\{ 
\begin{split} 
\partial_t \XX^k(t,x)&=\bb^k(\XX^k(t,x)),\\
\XX^k(0,x)&=x.
\end{split} 
\right. 
\end{equation}

The measure defined by $$\eeta^k(A)=\int_{\T^d}\ddelta_{\XX^k(\cdot,x)}(A)dx,$$ 
for every Borel set $A$ in $\Gamma$ is then the unique Lagrangian representation of $(1,\bb^k)$, as we clearly have $(e_t)_\#\eeta^k=\XX^k(t,\cdot)_\#\Leb{d}=\Leb{d},$ since $\bb^k$ is divergence-free, and also that $\eeta^k$ is clearly concentrated on integral curves of $\bb^k$. 

\bigskip 

\textbf{Step 1.} Compactness. Recall the definition of the space $\Gamma_L$ of Lipchitz paths with Lipschitz constant $L>0$ given in \eqref{eqn_lipschitz_path}. 
In view of Lemma \ref{lem_conc_lipschitz}, we have that 
$\eeta^k(\Gamma_{\|\bb\|_{L^\infty_{t,x}}})=1$ for every $k\in\N$.
As $\Gamma_{\|\bb\|_{L^\infty_{t,x}}}$ is compact by Remark \ref{rmk_compact_seprable}, it follows by Prokhorov theorem, that there exists an increasing map $\xi :\N\to\N$ such that 
	$\eeta^{\xi(k)}$ converges narrowly to some probability measure $\eeta$ on $\Gamma_{\|\bb\|_{L^\infty_{t,x}}}$ as $k\to+\infty.$

\bigskip 

\textbf{Step 2.} Let us prove that $\eeta$ is a Lagrangian representation of $(1,\bb)$. Let $\phi\in C(\T^d)$ and $t\in [0,1]$. We have that $\Gamma\ni \gamma \longmapsto \phi(e_t(\gamma))$ is in $C_b(\Gamma)$. Therefore,
\begin{equation*}
\int_\Gamma \phi(e_t(\gamma))\eeta(d\gamma)=\lim_{k\to+\infty}\int_\Gamma \phi(e_t(\gamma))\eeta^{\xi(k)}(d\gamma)=\int_{\T^d}\phi(x)dx.
\end{equation*}
As $\phi$ and $t$ were arbitrary, this implies that $(e_t)_\#\eeta=\Leb{d}$ for every $t\in[0,1]$. 
We still need to prove that $\eeta$ is concentrated on integral curves of $\bb.$
 Let $s,t\in[0,1]$. We have to check that
\begin{equation}\label{eqn_eeta_integral_curves}
\int_\Gamma \Big|\gamma(t)-\gamma(s)-\int_s^t\bb(\tau,\gamma(\tau))d\tau\Big|\eeta(d\gamma)=0.
\end{equation}
We know that 
\begin{equation*}
\int_\Gamma \Big|\gamma(t)-\gamma(s)-\int_s^t\bb(\tau,\gamma(\tau))d\tau\Big|\eeta^{\xi(k)}(d\gamma)=0,
\end{equation*}
however we cannot pass into the limit $k\to +\infty$ in the above equation because the functional 
\begin{equation}
\Gamma \ni \gamma \longmapsto \Big|\gamma(t)-\gamma(s)-\int_s^t\bb(\tau,\gamma(\tau))d\tau\Big|,
\end{equation}
need not be continuous since $\bb$ is not continuous. To circumvent this problem,  let $\e>0$ and let $\cc:[0,1]\times \T^d\to \R^d$ be a continuous vector field such that $\int_s^t|\cc(\tau,x)-\bb(\tau,x)|d\tau dx<\e$. We then have
\begin{equation*}
\begin{split} 
&\int_\Gamma \Big|\gamma(t)-\gamma(s)-\int_s^t\bb(\tau,\gamma(\tau))d\tau\Big|\eeta(d\gamma)\\
&\overset{1}{\leq} \int_\Gamma\Big|\gamma(t)-\gamma(s)-\int_s^t\cc(\tau,\gamma(\tau))d\tau\Big|\eeta(d\gamma)+\int_\Gamma \Big|\int_s^t\cc(\tau,\gamma(\tau))-\bb(\tau,\gamma(\tau))\Big|\eeta(d\gamma)\\
&\overset{2}{\leq} \limsup_{k\to +\infty} \int_\Gamma\Big|\gamma(t)-\gamma(s)-\int_s^t\cc(\tau,\gamma(\tau))d\tau\Big|\eeta^k(d\gamma) +\int_\Gamma \Big|\int_s^t\cc(\tau,\gamma(\tau))-\bb(\tau,\gamma(\tau))\Big|\eeta(d\gamma)\\
&\overset{3}{=} \limsup_{k\to +\infty} \int_\Gamma\Big|\int_s^t(\bb^k(\tau,\gamma(\tau))-\cc(\tau,\gamma(\tau)))d\tau\Big|\eeta^k(d\gamma) +\int_\Gamma \Big|\int_s^t(\cc(\tau,\gamma(\tau))-\bb(\tau,\gamma(\tau)))d\tau\Big|\eeta(d\gamma)\\
&\overset{4}{\leq} \limsup_{k\to +\infty} \int_\Gamma\int_s^t\Big|\bb^k(\tau,\gamma(\tau))-\cc(\tau,\gamma(\tau))\Big|d\tau\eeta^k(d\gamma) +\int_\Gamma \int_s^t\Big|\cc(\tau,\gamma(\tau))-\bb(\tau,\gamma(\tau))\Big|d\tau\eeta(d\gamma)\\
&\overset{5}{=}  \limsup_{k\to +\infty} \int_\Gamma\int_s^t\Big|\bb^k(\tau,x)-\cc(\tau,x)\Big|d\tau dx +\int_\Gamma \int_s^t\Big|\cc(\tau,x)-\bb(\tau,x)\Big|d\tau dx\\
&\overset{6}{\leq}  \limsup_{k\to +\infty} \int_{\T^d}\int_s^t\Big|\bb^k(\tau,x)-\bb(\tau,x)\Big|d\tau dx +2\int_{\T^d} \int_s^t\Big|\cc(\tau,x)-\bb(\tau,x)\Big|d\tau dx\\
&\overset{7}{=}2\int_{\T^d} \int_s^t\Big|\cc(\tau,x)-\bb(\tau,x)\Big|d\tau dx<2\e.
\end{split} 
\end{equation*}
1 follows by a triangular inequality, 2 follows because the functional 
\begin{equation*}
\Gamma \ni \gamma \longmapsto \Big|\gamma(t)-\gamma(s)-\int_s^t\cc(\tau,\gamma(\tau))d\tau\Big|
\end{equation*}
is continuous, 3 follows because $\eeta^k$ is concentrated on integral curves of $\bb^k$, 4 follows by bringing the absolute value inside the integral, 5 follows because $(e_\tau)_\#\eeta^k=\Leb{d}=(e_\tau)_\#\eeta,$ 6 follows by a triangular inequality, and 7 follows since $\bb^k\to \bb$ in $L^1_{loc}$. As $\e$ was arbitrary, \eqref{eqn_eeta_integral_curves} follows. Therefore $\eeta$ is a Lagrangian representation of $(1,\bb)$. By the first part of Theorem \ref{thm_main} we have already proved, we know that there exists a unique Lagrangian representation $\eeta$ of $(1,\bb)$. Therefore, the whole sequence $\eeta^k$ converges narrowly to $\eeta$ as $k\to+\infty$. This proves the thesis.

\end{proof}

\bigskip 

\section{Stochasticity} \label{sec_stoch}
We will now prove part $(iii)$ of Theorem \ref{thm_main}. Throughout this section, $\eeta$ is the unique Lagrangian representation of $(1,\bb)$ from the first part of Theorem \ref{thm_main}. 
Recall that we have fixed a regular measurable selection $\{\gamma_{1,y}\}$ of integral curves of $\bb$ starting from $1$ and that $$\eeta=\int_{\T^d}\ddelta_{\gamma_{1,y}}dy.$$
We also define the measure $\nnu =(e_0,e_1)_\#\eeta$ on $\T^d\times\T^d$.
 For every $x\in \T^d,$ we define the family of measures on $\Gamma$
\begin{equation}\label{eqn_def_ddelta_x_gamma}
\ddelta_{x,\gamma_{1,y}}:=\left\{ 
\begin{split} 
\ddelta_{\gamma_{1,y}}\qquad &\text{if} \quad \gamma_{1,y}(0)=x,\\
0\qquad &\text{if}\quad \gamma_{1,y}(0)\neq x.
\end{split}
\right. 
\end{equation}
Define the projection maps $$\pi_0 :\T^d\times\T^d\ni (x,y)\longmapsto x\in \T^d,$$ and $$\pi_1 :\T^d\times\T^d\ni (x,y)\longmapsto y\in \T^d.$$ Let $\{\nnu_x\}$ be a disintegration of $\nnu$ with respect to $\pi_0$ and $\Leb{d}$. 

\subsection{Disintegration of $\eeta$ with respect to $\nnu$}\label{sec_disint}
We will now give an expression for disintegrations of $\eeta$ with respect to $e_0$ and $\Leb{d}$ in terms of $\{\nnu_x\}$. Throughout this section $\{\eeta_{0,x}\}$ is a disintegration of $\eeta$ with respect to $e_0$ and $\Leb{d}$. We then define $\tilde\nnu_x :=(\pi_1)_\#\nnu_x$ for every $x\in \T^d$. We also define the probability measure 
\begin{equation} \label{eqn_def_nu_y}
\nnu_y:=\ddelta_{(\gamma_{1,y}(0),y)},
\end{equation}
 on $\T^d\times \T^d$ for every $y\in \T^d$. 
\begin{lemma} \label{lem_disint_nu}
The family	$\{\nnu_y\}$ is a disintegration of $\nnu$ with respect to $\pi_1$ and $\Leb{d}.$ 
\end{lemma}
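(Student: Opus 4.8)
The plan is to verify directly the two defining conditions (i) and (ii) of a disintegration from Section \ref{sec_disintegration}, applied with $X=\T^d\times\T^d$, $Y=\T^d$, $f=\pi_1$, the measure $\mmu=\nnu$, and target measure $\Leb{d}$. Since each $\nnu_y=\ddelta_{(\gamma_{1,y}(0),y)}$ is a probability measure, I first record that the target marginal is correct: because $\eeta$ is a Lagrangian representation of $(1,\bb)$ we have $(e_1)_\#\eeta=\Leb{d}$, hence
$$(\pi_1)_\#\nnu=(\pi_1)_\#(e_0,e_1)_\#\eeta=(e_1)_\#\eeta=\Leb{d}.$$
This places us in the regime where the disintegration is into probability measures and is essentially unique.

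Condition (i) is immediate: the point $(\gamma_{1,y}(0),y)$ has second coordinate $y$, so it lies in $\pi_1^{-1}(y)$, and therefore $\nnu_y=\ddelta_{(\gamma_{1,y}(0),y)}$ is supported on $\pi_1^{-1}(y)$. The required Borel dependence of $y\mapsto\nnu_y$ follows since $y\mapsto\gamma_{1,y}$ is Borel (being a measurable selection) and $\gamma\mapsto(\gamma(0),\gamma(1))$ is continuous. The heart of the matter is condition (ii), which rests on the single observation that $\gamma_{1,y}$ is an integral curve \emph{starting from $y$ at time $1$}, so that $\gamma_{1,y}(1)=y$. Consequently
$$(e_0,e_1)_\#\ddelta_{\gamma_{1,y}}=\ddelta_{(\gamma_{1,y}(0),\gamma_{1,y}(1))}=\ddelta_{(\gamma_{1,y}(0),y)}=\nnu_y.$$
I would then make condition (ii) rigorous by testing against an arbitrary Borel set $A\subset\T^d\times\T^d$: using $\nnu=(e_0,e_1)_\#\eeta$, the disintegration formula \eqref{eqn_disintegration} for $\eeta=\int_{\T^d}\ddelta_{\gamma_{1,y}}dy$, and the identity just displayed,
$$\nnu(A)=\eeta\big((e_0,e_1)^{-1}(A)\big)=\int_{\T^d}\ddelta_{\gamma_{1,y}}\big((e_0,e_1)^{-1}(A)\big)\,dy=\int_{\T^d}\nnu_y(A)\,dy,$$
which is exactly the decomposition $\nnu=\int_{\T^d}\nnu_y\,dy$.

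There is no serious obstacle here: the statement is a bookkeeping consequence of the endpoint identity $\gamma_{1,y}(1)=y$ together with the already-established representation $\eeta=\int_{\T^d}\ddelta_{\gamma_{1,y}}dy$ from part $(i)$ of Theorem \ref{thm_main}. The only points requiring a modicum of care are the Borel measurability of the family $\{\nnu_y\}$ and the legitimacy of commuting the pushforward by $(e_0,e_1)$ with the integral over $y$; both follow from the fact, used throughout this section, that $y\mapsto\gamma_{1,y}$ is a Borel map into $\Gamma$.
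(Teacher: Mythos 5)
Your proof is correct and follows essentially the same route as the paper's: both verify the support condition directly and establish the decomposition $\nnu(A)=\int_{\T^d}\nnu_y(A)\,dy$ by pushing forward the disintegration $\eeta=\int_{\T^d}\ddelta_{\gamma_{1,y}}dy$ under $(e_0,e_1)$ and using $\gamma_{1,y}(1)=y$. Your additional remarks on the marginal $(\pi_1)_\#\nnu=\Leb{d}$ and Borel measurability are harmless elaborations of points the paper leaves implicit.
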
	
\begin{proof}
	It is clear that $\nnu_y$ is supported on $\pi_1^{-1}(y)$. 
	By part $(i)$ of Theorem \ref{thm_main}, we know also that $\{ \ddelta_{\gamma_{1,y}} \}$ is a disintegration of $\eeta$ with respect to $e_1$ and $\Leb{d}$. Therefore, for every Borel set $A$ in $\T^d\times \T^d$, we have $$\nnu(A)=(e_0,e_1)_\#\eeta(A)= \int_{\T^d} (e_0,e_1)_\#\ddelta_{\gamma_{1,y}} (A)dy=\int_{\T^d} \ddelta_{(\gamma_{1,y}(0),y)}(A)dy=\int_{\T^d}\nnu_y(A)dy,$$
	which proves the thesis.
\end{proof}

\begin{lemma}\label{lem_disint_eeta_nu}
The family $\{\ddelta_{x,\gamma_{1,y}}: x,y\in \T^d\}$ is a disintegration of $\eeta$ with respect to $(e_{0},e_1)$ and $\nnu$. 
	\end{lemma}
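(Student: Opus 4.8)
The plan is to verify directly the two defining properties of a disintegration stated in Section \ref{sec_disintegration}, taking $X=\Gamma_{\|\bb\|_{L^\infty_{t,x}}}$, $Y=\T^d\times\T^d$, $\mmu=\eeta$, $f=(e_0,e_1)$ and target measure $\nnu=(e_0,e_1)_\#\eeta$. Since the pushforward of $\eeta$ is exactly $\nnu$, the probability-measure version of the disintegration theorem applies, and it suffices to check that $\{\ddelta_{x,\gamma_{1,y}}\}$ is a Borel family concentrated on the level sets of $(e_0,e_1)$ and satisfies the reconstruction identity \eqref{eqn_disintegration}.

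First I would record concentration. For $(x,y)\in\T^d\times\T^d$ set $E_{(x,y)}:=(e_0,e_1)^{-1}(x,y)=\{\gamma:\gamma(0)=x,\ \gamma(1)=y\}$. By \eqref{eqn_def_ddelta_x_gamma}, if $\gamma_{1,y}(0)\neq x$ then $\ddelta_{x,\gamma_{1,y}}=0$ is trivially concentrated on $E_{(x,y)}$; if $\gamma_{1,y}(0)=x$ then $\ddelta_{x,\gamma_{1,y}}=\ddelta_{\gamma_{1,y}}$, and since $\gamma_{1,y}$ is an integral curve starting from $y$ at time $1$ one has $\gamma_{1,y}(1)=y$ and, by assumption, $\gamma_{1,y}(0)=x$, whence $\gamma_{1,y}\in E_{(x,y)}$. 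This gives property $(i)$. For the Borel-family requirement I would note that $y\mapsto\gamma_{1,y}$ is Borel (being a regular measurable selection) and $e_0$ is continuous, so that $\{(x,y):e_0(\gamma_{1,y})=x\}$ is Borel; for every $\Phi\in C_b(\Gamma)$ the map $(x,y)\mapsto\int_\Gamma\Phi\,d\ddelta_{x,\gamma_{1,y}}$, which equals $\Phi(\gamma_{1,y})$ on that set and $0$ elsewhere, is then Borel.

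The substance of the lemma is the reconstruction identity, which I would establish by integrating over $\nnu$ through its disintegration along $\pi_1$ furnished by Lemma \ref{lem_disint_nu}. Fix a Borel set $A\subseteq\Gamma$. The integrand $(x,y)\mapsto\ddelta_{x,\gamma_{1,y}}(A)$ is Borel and takes values in $[0,1]$, so property \eqref{eqn_int_disintegration} applied to the disintegration $\{\nnu_y\}$ of $\nnu$, with $\nnu_y=\ddelta_{(\gamma_{1,y}(0),y)}$, yields
\begin{equation*}
\int_{\T^d\times\T^d}\ddelta_{x,\gamma_{1,y}}(A)\,d\nnu(x,y)=\int_{\T^d}\Big[\int_{\T^d\times\T^d}\ddelta_{x,\gamma_{1,y}}(A)\,d\nnu_y(x,y)\Big]dy=\int_{\T^d}\ddelta_{\gamma_{1,y}(0),\gamma_{1,y}}(A)\,dy,
\end{equation*}
where the inner integral collapses because $\nnu_y$ is the Dirac mass at $(\gamma_{1,y}(0),y)$. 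On the slice $x=\gamma_{1,y}(0)$ the definition \eqref{eqn_def_ddelta_x_gamma} gives $\ddelta_{\gamma_{1,y}(0),\gamma_{1,y}}=\ddelta_{\gamma_{1,y}}$, so the right-hand side equals $\int_{\T^d}\ddelta_{\gamma_{1,y}}(A)\,dy=\eeta(A)$ by the representation $\eeta=\int_{\T^d}\ddelta_{\gamma_{1,y}}\,dy$ from part $(i)$ of Theorem \ref{thm_main}. This is property $(ii)$, completing the proof.

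I do not anticipate a genuine obstacle: the statement is essentially a bookkeeping identity. The only delicate point is the interplay between the two coordinates, since the family is indexed by the whole product $\T^d\times\T^d$ while $\nnu$ charges only the set $\{(x,y):x=\gamma_{1,y}(0)\}$, and it is precisely the Dirac structure of $\nnu_y$ that makes the zero/non-zero dichotomy in \eqref{eqn_def_ddelta_x_gamma} select the correct slice and reproduce $\eeta$. The accompanying routine checks are that the integrand is Borel (so that \eqref{eqn_int_disintegration} applies) and that $\ddelta_{x,\gamma_{1,y}}$ is a probability measure for $\nnu$-a.e. $(x,y)$, which holds since $\nnu$ is concentrated on $\{(x,y):x=\gamma_{1,y}(0)\}$.
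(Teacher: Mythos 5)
Your proposal is correct and follows essentially the same route as the paper: concentration on the level sets of $(e_0,e_1)$ is immediate from the definition of $\ddelta_{x,\gamma_{1,y}}$, and the reconstruction identity is obtained by integrating against the disintegration $\{\nnu_y\}$ of $\nnu$ along $\pi_1$ from Lemma \ref{lem_disint_nu}, collapsing the Dirac mass at $(\gamma_{1,y}(0),y)$, and invoking $\eeta=\int_{\T^d}\ddelta_{\gamma_{1,y}}\,dy$. The only difference is that you spell out the Borel-measurability of the family, which the paper leaves implicit.
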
 

\begin{proof}
	It is clear that $\ddelta_{x,\gamma_{1,y}}$ is supported on $(e_0,e_1)^{-1} (x,y)$. 
For every Borel set $A$ contained in $\Gamma$, we have
	\begin{equation}
	\begin{split} 
	\int_{\T^d\times\T^d}\ddelta_{x,\gamma_{1,y}}(A) \nnu(dx,dy)&=\int_{\T^d}\int_{\T^d\times \{y\}} \ddelta_{x,\gamma_{1,y}}(A) d\nnu_ydy\\
	&=\int_{\T^d} \ddelta_{\gamma_{1,y}}(A)dy\\
	&=\eeta(A),
	\end{split}
	\end{equation}
	where in the first equality we have used Lemma \ref{lem_disint_nu}, as well as \eqref{eqn_int_disintegration}. In the second equality we have used the definition \eqref{eqn_def_nu_y} of $\nnu_y$ and the definition \eqref{eqn_def_ddelta_x_gamma} of $\ddelta_{x,\gamma_{1,y}}$, and in the last equality we have used that $\{\ddelta_{\gamma_{1,y}}\}$ is a disintegration of $\eeta$ with respect to $e_1$ and $\Leb{d}$, which follows from $(i)$ of Theorem \ref{thm_main}, which we have already proved. 
	This proves the claim.
\end{proof}
\begin{lemma}\label{lem_disint_nu_x}
For $\Leb{d}$-a.e. $x\in \T^d$, we have
	\begin{equation}
	\eeta_{0,x}=\int_{\T^d}\ddelta_{\gamma_{1,y}}d\tilde\nnu_x.
	\end{equation}
\end{lemma}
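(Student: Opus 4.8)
The plan is to exhibit an explicit candidate for the disintegration of $\eeta$ with respect to $e_0$ and $\Leb{d}$, namely the family $\{\bar\eeta_{0,x}\}$ given by $\bar\eeta_{0,x}:=\int_{\T^d}\ddelta_{\gamma_{1,y}}\,d\tilde\nnu_x(y)$, to verify that it satisfies properties $(i)$ and $(ii)$ of a disintegration, and then to conclude by the essential uniqueness of disintegrations recalled in Section \ref{sec_disintegration}. The structural observation behind this is that $e_0=\pi_0\circ(e_0,e_1)$, so the disintegration with respect to $e_0$ should be obtainable by chaining the disintegration of $\eeta$ with respect to $(e_0,e_1)$ from Lemma \ref{lem_disint_eeta_nu} with the disintegration $\{\nnu_x\}$ of $\nnu$ with respect to $\pi_0$.

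First I would record the key concentration fact: the measure $\nnu$ is concentrated on the Borel graph $G:=\{(x,y)\in\T^d\times\T^d:\gamma_{1,y}(0)=x\}$. This is immediate from Lemma \ref{lem_disint_nu}, since $\nnu=\int_{\T^d}\nnu_y\,dy$ with $\nnu_y=\ddelta_{(\gamma_{1,y}(0),y)}$ supported on $G$, whence $\nnu(G^c)=\int_{\T^d}\nnu_y(G^c)\,dy=0$ (that $G$ is Borel follows because $y\mapsto\gamma_{1,y}(0)=e_0(\gamma_{1,y})$ is Borel). The consequence I will exploit is that $\ddelta_{x,\gamma_{1,y}}=\ddelta_{\gamma_{1,y}}$ for $\nnu$-a.e. $(x,y)$, since the two differ only off $G$.

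Next I would verify property $(ii)$ for the candidate family. For a Borel set $A\subseteq\Gamma$, since $\tilde\nnu_x=(\pi_1)_\#\nnu_x$ and the integrand $\ddelta_{\gamma_{1,y}}(A)$ depends only on $y$, I have $\bar\eeta_{0,x}(A)=\int_{\T^d\times\T^d}\ddelta_{\gamma_{1,y}}(A)\,d\nnu_x(x',y)$. Integrating in $x$ against $\Leb{d}$ and applying the integration formula \eqref{eqn_int_disintegration} for the disintegration $\{\nnu_x\}$ of $\nnu$ with respect to $\pi_0$ gives $\int_{\T^d}\bar\eeta_{0,x}(A)\,d\Leb{d}(x)=\int_{\T^d\times\T^d}\ddelta_{\gamma_{1,y}}(A)\,d\nnu(x',y)$. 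By the concentration fact this equals $\int_{\T^d\times\T^d}\ddelta_{x',\gamma_{1,y}}(A)\,d\nnu(x',y)$, which is exactly $\eeta(A)$ by Lemma \ref{lem_disint_eeta_nu}. For property $(i)$ — that $\bar\eeta_{0,x}$ is supported on $e_0^{-1}(x)$ — I would use the same concentration fact: for $\Leb{d}$-a.e. $x$ the equality $0=\nnu(G^c)=\int_{\T^d}\nnu_x(G^c)\,d\Leb{d}(x)$ forces $\nnu_x$ to be concentrated on $(\{x\}\times\T^d)\cap G$, so $\tilde\nnu_x$-a.e. $y$ satisfies $\gamma_{1,y}(0)=x$, and then each $\ddelta_{\gamma_{1,y}}$ entering $\bar\eeta_{0,x}$ charges only curves $\gamma$ with $\gamma(0)=x$.

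The one point requiring care is measurability: to invoke the uniqueness statement I must check that $x\mapsto\bar\eeta_{0,x}$ is a Borel family of probability measures, which I will deduce from the Borel dependence of $x\mapsto\nnu_x$ (hence of $x\mapsto\tilde\nnu_x$) together with the Borel measurability of $y\mapsto\ddelta_{\gamma_{1,y}}$ coming from the regular measurable selection. Granting this, $\{\bar\eeta_{0,x}\}$ is a disintegration of $\eeta$ with respect to $e_0$ and $\Leb{d}$, so essential uniqueness yields $\eeta_{0,x}=\bar\eeta_{0,x}=\int_{\T^d}\ddelta_{\gamma_{1,y}}\,d\tilde\nnu_x$ for $\Leb{d}$-a.e. $x$, which is the claim. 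I expect the main obstacle to be precisely this bookkeeping — correctly chaining the two disintegrations through \eqref{eqn_int_disintegration} and confirming the Borel-family measurability — rather than any deep analytic difficulty, since the concentration of $\nnu$ on the graph $G$ does all of the conceptual work.
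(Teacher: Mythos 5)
Your proposal is correct and follows essentially the same route as the paper: both chain the disintegration $\{\ddelta_{x,\gamma_{1,y}}\}$ of $\eeta$ with respect to $(e_0,e_1)$ and $\nnu$ (Lemma \ref{lem_disint_eeta_nu}) with the disintegration $\{\nnu_x\}$ of $\nnu$ with respect to $\pi_0$, the only difference being that the paper tests the given family $\{\eeta_{0,x}\}$ against Borel sets $B\subset\T^d$ and a countable generating family of the Borel $\sigma$-algebra of $\Gamma$, whereas you verify that the explicit candidate is itself a disintegration and then invoke essential uniqueness. Your isolation of the concentration of $\nnu$ on the graph $\{(x,y):\gamma_{1,y}(0)=x\}$ makes explicit a step the paper leaves implicit in its equality 5; the only point to add is that your candidate satisfies property $(i)$ only for $\Leb{d}$-a.e.\ $x$, so it should be modified on a Lebesgue-null set before the essential-uniqueness statement of Section \ref{sec_disintegration} is applied verbatim.
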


	\begin{proof}
	Let $B$ be a Borel set contained in $\T^d$. As $\Gamma$ is separable, its Borel $\s$-algebra is generated by a countable family $\mathscr{G}$. Let $A\in \mathscr{G}$. We then have
	\begin{equation}
	\begin{split}
\int_B \eeta_{0,x}(A)dx
&\overset{1}{=}\int_{\T^d} \eeta_{0,x}(A\cap\{\gamma(0)\in B\})dx\\
&\overset{2}{=}\eeta(A\cap\{\gamma(0)\in B\})\\
&\overset{3}{=}\int_{\T^d\times\T^d} \ddelta_{x,\gamma_{1,y}}(A\cap\{\gamma(0)\in B\})\nnu(dx,dy)\\
&\overset{4}{=}\int_B\Big[ \int_{\{x\}\times\T^d} \ddelta_{x,\gamma_{1,y}}(A) d\nnu_x\Big]dx \\
&\overset{5}{=}\int_B\Big[\int_{\T^d}\ddelta_{\gamma_{1,y}}(A)\tilde\nnu_x(dy)\Big]dx.
\end{split} 
	\end{equation}
	In equality $1$, we have used that $\eeta_{0,x}$ is supported on $\{\gamma(0)=x\}$ for every $x\in\T^d$. In equality $2$, we have used that $\{\eeta_{0,x}\}$ is a disintegration of of $\eeta$ with respect to $e_0$ and $\Leb{d}$. In equality $3$, we have used Lemma \ref{lem_disint_eeta_nu}. In equality $4$, we have used that $\{\nnu_x\}$ is a disintegration of $\nnu$ with respect to $\pi_0$ and $\Leb{d}$, equation \eqref{eqn_int_disintegration}, as well as the fact that $\ddelta_{x,\gamma_{1,y}}(A\cap \{\gamma(0)\in B\})=0$ if $x\notin B$ by definition. 
In equality $5$, we have used the definition of $\ddelta_{x,\gamma_{1,y}}$ as well as the definition of $\{\tilde\nnu_x\}$. 
	As $B$ was an arbitrary Borel set in $\T^d$, there exists a set $N_A$ of vanishing Lebesgue measure such that for every $x\in \T^d-N_A$, we have
	$$\eeta_{0,x}(A)=\int_{\T^d}\ddelta_{\gamma_{1,y}}(A)\tilde\nnu_x(dy).$$
	Now define $$N:=\bigcup_{A\in \mathscr{G}}N_A,$$ 
	which is a set of vanishing Lebesgue measure. 
	Then, for every $A\in \mathscr{G}$ and every $x\in \T^d- N$, we have 
	\begin{equation}
	\eeta_{0,x}(A)=\int_{\T^d}\ddelta_{\gamma_{1,y}}(A)\tilde\nnu_x(dy). 
	\end{equation}
As $\mathscr{G}$ generates the Borel $\s$-algebra of $\Gamma$, the thesis is proved. 
\end{proof}

 \subsection{Proof of $(iii)$ of Theorem \ref{thm_main}}
 We can know conclude the proof of Theorem \ref{thm_main}. 
  \begin{proof}
  	Recall that $\eeta$ is the unique Lagrangian representation of $(1,\bb)$ from the first part of Theorem \ref{thm_main}, and that we have fixed a regular measurable selection $\{\gamma_{1,y}\}$ of integral curves of $\bb$.
  	In view of Lemma \ref{lem_disint_nu_x}, there exists a Borel family of probability measures $\{\tilde\nnu_x\}$ defined in Section \ref{sec_disint} such that 
  	\begin{equation}
  	\eeta_{0,x}=\int_{\T^d}\ddelta_{\gamma_{1,y}}\tilde\nnu_x(dy),
  	\end{equation}
  	for $\Leb{d}$-a.e. $x\in \T^d$, which is the first part the statement of part $(iii)$ of Theorem \ref{thm_main}.

  	\bigskip
  	
  	Let us now show that for the vector field $\bb_{DP}:[0,1]\times \T^2\to \R^2$ constructed by Depauw, the family probability measures $\tilde\nnu_x$ are not Dirac masses for $\Leb{2}$-a.e. $x\in \T^2$. From the Appendix, we have that $\rho^B, \;\rho^W:[0,1]\times \T^2\to \R^+$ are two bounded densities in $C([0,1];w^*-L^\infty(\T^2))$ such that:
  	\begin{enumerate} 
  	\item  $\rho^B(1,\bb_{DP})$ and $\rho^W(1,\bb_{DP})$ solve \eqref{eq_pde};
  	\item $\rho^B(0,\cdot)=1/2=\rho^W(0,\cdot)$;
  	\item $\rho^B(t,\cdot)+\rho^W(t,\cdot)=1$ for every $t\in[0,1]$;
  	\item $\supp\rho^B(1,\cdot)\cup \supp\rho^W(1,\cdot)=\T^2$;
  	\item $\supp\rho^B(1,\cdot)\cap \supp\rho^W(1,\cdot)$ is of vanishing Lebesgue measure. 
  	 \end{enumerate} 
 
 \bigskip 
 
 Let $\eeta^B$ and $\eeta^W$ be two Lagrangian representations of $\rho^B(1,\bb_{DP})$ and $\rho^W(1,\bb_{DP})$ respectively, whose existence follows from Ambrosio's superposition principle. Let $\nnu^B$ and $\nnu^W$ be two probability measures given by $\nnu^B=(e_0,e_1)_\#\eeta^B$ and $\nnu^W=(e_0,e_1)_\#\eeta^W$. Let $\{\eeta_{0,x}^B\}$ be a disintegration of $\eeta^B$ with respect to $e_0$ and $\Leb{2}$, and let $\{\nnu_x^B\}$ be a disintegration of $\nnu^B$ with respect to $\pi_0$ and $\Leb{2}$. Similarly, let $\{\eeta_{0,x}^W\}$ be a disintegration of $\eeta^W$ with respect to $e_0$ and $\Leb{2}$, and let $\{\nnu_x^W\}$ be a disintegration of $\nnu^W$ with respect to $\pi_0$ and $\Leb{2}$.
 Note that by definition of $\nnu^B$ and $\nnu^W$, we have
 $$(\pi_1)_\#\nnu^B=(e_1)_\#\eeta^B\qquad\text{and}\qquad (\pi_1)_\#\nnu^W=(e_1)_\#\eeta^W.$$ 
This clearly implies that for $\Leb{2}$-a.e. $x\in\T^2$, we have
 \begin{equation}\label{eqn_disint_nu}
  (\pi_1)_\#\nnu^B_x=(e_1)_\#\eeta_{0,x}^B \qquad\text{and} \qquad (\pi_1)_\#\nnu^W_x=(e_1)_\#\eeta_{0,x}^W.
  \end{equation}
  Therefore, we have 
  \begin{equation*}
  \begin{split} 
  \int_{\T^2}(\pi_1)_\#\nnu^B_x(\supp \rho^W(1,\cdot))dx&=\int_{\T^2}(e_1)_\#\eeta^B_x(\supp\rho^W(1,\cdot))dx\\
  &=(e_1)_\#\eeta^B(\supp \rho^W(1,\cdot))\\
  &=\int_{\supp\rho^W(1,\cdot)}\rho^B(1,x)dx\\
  &=0.
  \end{split} 
  \end{equation*}
  Similarly, we have 
  \begin{equation*}
   \int_{\T^2}(\pi_1)_\#\nnu^W_x(\supp \rho^B(1,\cdot))dx=0.
  \end{equation*}
Therefore, for $\Leb{2}$-a.e. $x\in\T^2$, in view of property (iv) above, the probability measures $(\pi_1)_\#\nnu^W_x$ and $(\pi_1)_\#\nnu^B_x$ are mutually singular. 
Also, by property (iii) above, and by uniqueness of the Lagrangian representation of $(1,\bb)$, we have that 
\begin{equation}
\eeta=\frac{1}{2}(\eeta^B+\eeta^W).
\end{equation}
 By essential uniqueness of the disintegration, we therefore have for $\Leb{2}$-a.e. $x\in \T^2$  
\begin{equation*}
\nnu_x=\frac{1}{2}(\nnu^W_x+\nnu^B_x).
\end{equation*}
Therefore, for $\Leb{2}$-a.e. $x\in\T^2$, we have
\begin{equation*}
\tilde\nnu_x=(\pi_1)_\#\nnu_x=\frac{1}{2}(((\pi_1)_\#\nnu^W_x+(\pi_1)_\#\nnu^B_x).
\end{equation*}
whereby for $\Leb{2}$-a.e. $x\in \T^2$ the probability measure $\tilde\nnu_x$ is not a Dirac mass. 
This concludes the proof of $(iii)$ of Theorem \ref{thm_main}. 
\end{proof}

\section*{Appendix}
We construct the bounded, divergence-free vector field $\bb_{DP}:[0,1]\times\T^2\to \R^2$ of Depauw from \cite{Depauw}, as well as two densities $\rho^W,\rho^B:[0,1]\times \T^2\to \R^+$ such that the vector fields $\rho^W(1,\bb_{DP})$ and $\rho^B(1,\bb_{DP}) $ solve \eqref{eq_pde}, and have the following properties:
  	\begin{enumerate} 
	\item  $\rho^B(1,\bb_{DP})$ and $\rho^W(1,\bb_{DP})$ solve \eqref{eq_pde};
	\item $\rho^B(0,\cdot)=1/2=\rho^W(0,\cdot)$;
	\item $\rho^B(t,\cdot)+\rho^W(t,\cdot)=1$ for every $t\in[0,1]$;
		\item $\supp\rho^B(1,\cdot)\cup \supp\rho^W(1,\cdot)=\T^2$;
	\item $\supp\rho^B(1,\cdot)\cap \supp\rho^W(1,\cdot)$ is of vanishing Lebesgue measure.
\end{enumerate} 
We follow closely the construction of a similar vector field given in \cite{DeLellis_Giri22}. 
\bigskip 

Introduce the following two lattices on $\mathbb R^2$, namely $\mathcal{L}^1 := \mathbb Z^2\subset \mathbb R^2$ and $\mathcal{L}^2:=\mathbb Z^2 + (\frac{1}{2}, \frac{1}{2})\subset \mathbb R^2$. To each lattice, associate a subdivision of the plane into squares, which have vertices lying in the corresponding lattices, which we denote by $\mathcal{S}^1$ and $\mathcal{S}^2$. Then consider the rescaled lattices $\mathcal{L}^1_k:= 2^{-k} \mathbb{Z}^2$ and $\mathcal{L}^2_k := (2^{-k-1},2^{-k-1})+2^{-k} \mathbb Z^2$ and the corresponding square subdivision of $\mathbb Z^2$, respectively $\mathcal{S}^1_k$ and $\mathcal{S}^2_k$. Observe that the centres of the squares $\mathcal{S}^1_k$ are elements of $\mathcal{L}^2_k$ and viceversa.


Next, define the following $2$-dimensional autonomous vector field:
\[
\ww(x) =
\begin{cases}
(0, 4x_1)^t\text{ , if }1/2 > |x_1| > |x_2| \\
(-4x_2, 0)^t\text{ , if }1/2 > |x_2| > |x_1| \\
(0, 0)^t\text{ , otherwise.} \\
\end{cases}
\]
$\ww$ is a bounded, divergence-free vector field, whose derivative is a finite matrix-valued Radon measure given by
\begin{equation*}
\begin{split} 
D\ww(x_1,x_2) = 
&\begin{pmatrix}
0 & 0 \\
4{\rm sgn}(x_1) & 0 
\end{pmatrix} 
\Leb{2}\lfloor_{\{|x_2|<|x_1|<1/2\}} +
\begin{pmatrix}
0 & -4{\rm sgn}(x_2) \\
0 & 0 
\end{pmatrix} 
\Leb{2}\lfloor_{\{|x_1|<|x_2|<1/2\}}\\
&+\begin{pmatrix}
4x_2{\rm sgn}(x_1) & -4x_2{\rm sgn}(x_2) \\
4x_1{\rm sgn}(x_1) & -4x_1{\rm sgn}(x_2)
\end{pmatrix} 
\mathscr{H}^{1}\lfloor_{\{x_1=x_2,0<|x_1|,|x_2|\leq 1/2\}}\\
\end{split} 
\end{equation*}

Periodise $\ww$ by defining $\Lambda = \{(y_1, y_2) \in \mathbb{Z}^2 : y_1 + y_2 \text{ is even}\}$ and setting 
\[
\uu(x) = \sum_{y \in \Lambda} \ww(x-y)\, .
\]

Even though $\uu$ is non-smooth, it is in $BV_{loc}(\R^2;\R^2)$. 
By the theory of regular Lagrangian flows (see for instance \cite{ambrosiocrippaedi}), there exists a unique incompressible almost everywhere defined flow $\XX$ along $\uu$ can be described explicitely. 
\begin{itemize}
	\item[(R)] The map $\XX (t,0 ,\cdot)$ is Lipschitz on each square $S$ of $\mathcal{S}^2$ and $\XX (1/2,0, \cdot)$ is a clockwise rotation of $\pi/2$ radians of the ``filled'' $S$, while it is the identity on the ``empty ones''. In particular for every $j\geq 1,$ $\XX (1/2,0, \cdot)$ maps an element of $\mathcal{S}^1_j$ rigidly onto another element of $\mathcal{S}^1_j$. For $j=1$ we can be more specific. Each $S\in \mathcal{S}^2$ is formed precisely by $4$ squares of $\mathcal{S}^1_1$: in the case of ``filled'' $S$ the $4$ squares are permuted in a $4$-cycle clockwise, while in the case of ``empty'' $S$ the $4$ squares are kept fixed.  
\end{itemize}

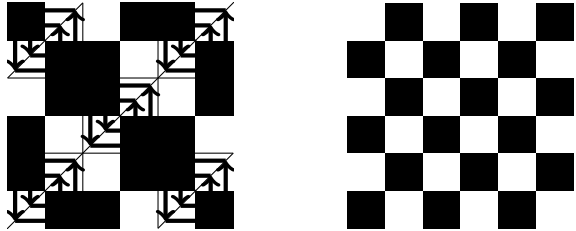
\begin{figure}[h]
	\centering
	\subfloat{
		\begin{tikzpicture}
		\clip(-1.5,-1.5) rectangle (1.5,1.5);
		\foreach \x in {-1,...,2} \foreach \y in {-1,...,2}
		{
			\pgfmathparse{mod(\x+\y,2) ? "black" : "white"}
			\edef\nu{\pgfmathresult}
			\path[fill=\nu, opacity=0.5] (\x-1,\y-1) rectangle ++ (1,1);
		}
		\draw (-1.5,0.5) -- (1.5,0.5);
		\draw (-1.5,-0.5) -- (1.5,-0.5);
		\draw (0.5, -1.5) -- (0.5, 1.5);
		\draw (-0.5, -1.5) -- (-0.5, 1.5);
		\foreach \a in {1,...,2}
		{
			\draw[ultra thick, ->] (\a/5,-\a/5) -- (\a/5,\a/5); 
			\draw[ultra thick, ->] (\a/5,\a/5) -- (-\a/5,\a/5);
			\draw[ultra thick, ->] (-\a/5,\a/5) -- (-\a/5,-\a/5);
			\draw[ultra thick, ->] (-\a/5,-\a/5) -- (\a/5,-\a/5);
		}
		\foreach \a in {1,...,2}
		{
			\draw[ultra thick, ->] (\a/5+1,-\a/5+1) -- (\a/5+1,\a/5+1); 
			\draw[ultra thick, ->] (\a/5+1,\a/5+1) -- (-\a/5+1,\a/5+1);
			\draw[ultra thick, ->] (-\a/5+1,\a/5+1) -- (-\a/5+1,-\a/5+1);
			\draw[ultra thick, ->] (-\a/5+1,-\a/5+1) -- (\a/5+1,-\a/5+1);
		}
		\foreach \a in {1,...,2}
		{
			\draw[ultra thick, ->] (\a/5+1,-\a/5-1) -- (\a/5+1,\a/5-1); 
			\draw[ultra thick, ->] (\a/5+1,\a/5-1) -- (-\a/5+1,\a/5-1);
			\draw[ultra thick, ->] (-\a/5+1,\a/5-1) -- (-\a/5+1,-\a/5-1);
			\draw[ultra thick, ->] (-\a/5+1,-\a/5-1) -- (\a/5+1,-\a/5-1);
		}
		\foreach \a in {1,...,2}
		{
			\draw[ultra thick, ->] (\a/5-1,-\a/5-1) -- (\a/5-1,\a/5-1); 
			\draw[ultra thick, ->] (\a/5-1,\a/5-1) -- (-\a/5-1,\a/5-1);
			\draw[ultra thick, ->] (-\a/5-1,\a/5-1) -- (-\a/5-1,-\a/5-1);
			\draw[ultra thick, ->] (-\a/5-1,-\a/5-1) -- (\a/5-1,-\a/5-1);
		}
		\foreach \a in {1,...,2}
		{
			\draw[ultra thick, ->] (\a/5-1,-\a/5+1) -- (\a/5-1,\a/5+1); 
			\draw[ultra thick, ->] (\a/5-1,\a/5+1) -- (-\a/5-1,\a/5+1);
			\draw[ultra thick, ->] (-\a/5-1,\a/5+1) -- (-\a/5-1,-\a/5+1);
			\draw[ultra thick, ->] (-\a/5-1,-\a/5+1) -- (\a/5-1,-\a/5+1);
		}
		\draw (-2,-2) -- (2,2);
		\draw (-2,2) -- (2,-2);
		\draw (-1.5,0.5) -- (-0.5, 1.5);
		\draw (1.5,0.5) -- (0.5,1.5);
		\draw (1.5,-0.5) -- (0.5, -1.5);
		\draw (-1.5,-0.5) -- (-0.5,-1.5);
		\end{tikzpicture}%
	}
	\qquad
	\qquad
	\subfloat
	{
		\begin{tikzpicture}[scale=0.5]
		\foreach \x in {-2,...,3} \foreach \y in {-2,...,3}
		{
			\pgfmathparse{mod(\x+\y,2) ? "white" : "black"}
			\edef\nu{\pgfmathresult}
			\path[fill=\nu, opacity=0.5] (\x-1,\y-1) rectangle ++ (1,1);
		}
		\end{tikzpicture}%
	}
	\caption{Action of the flow of $\uu$ from $t=0$ to $t=1/2$. The shaded region denotes the set $\{\rho^B=1\}$. The figure is from \cite{DeLellis_Giri22}.}
\end{figure}

Let $\rho^B:[1/2,1]\times \R^2\to \R^+$ be the unique  density such that $\rho^B(1,\uu)$ solves \eqref{eq_pde} and $\rho^B(1,\cdot)=\lfloor{x_1}\rfloor /2+ \lfloor{x_2}\rfloor/2 \ mod \ 2=:\bar\rho^B$. Then, we have the following formula  $\XX(t,0,\cdot)_\#\bar\rho^B\Leb{d}=\rho^B(t,x)\Leb{d}$. 
Using property (R), we have 
\begin{equation}\label{e:refining}
\rho^B(1/2, x) = 1 - \bar\rho^B(2x) .
\end{equation}

We define $\bb_{DP}:[0,1]\times\R^2\to\R^2$ as follows. Set $\bb_{DP}(t, x) = \uu(x)$ for $1/2<t\leq1$ and $\bb_{DP}(t, x) = \uu(2^k x)$ for $1/2^{k+1}<t\leq1/2^{k}$. Let $\rho^B:[0,1]\times \R^2\to \R^+$ be the unique  density such that $\rho^B(1,\uu)$ solves \eqref{eq_pde} with $\rho^B(0,\cdot)=\lfloor{x_1}\rfloor/2 + \lfloor{x_2}\rfloor/2 \ mod \ 2=:\bar\rho^B$
Moreover, using recursively the appropriately scaled version of \eqref{e:refining}, we can check that 
\begin{equation*} 
\rho^B(1/2^{k}, x) = \bar\rho^B (2^{k} x) \quad\text{for $k$ even,}\qquad \rho^B (1/2^{k}, x) =1- \bar\rho^B (2^{k} x)\quad\text{ for $k$ odd. }
\end{equation*} 

Define the density $\rho^W(t,x):=1-\rho^B(t,x)$. Then $\rho^W(1,\bb_{DP})$ also solves \eqref{eq_pde}, by linearity. As the construction we have performed is $\Z^2$-periodic, we may consider $\bb_{DP}$, $\rho^W,$ and $\rho^B$ to be defined on $[0,1]\times \T^2$. 
Properties (i)-(v) follow directly from the construction. 

\bibliographystyle{alpha}
\bibliography{bibliografia}
\end{document}